\documentclass[12pt]{article}
\usepackage{latexsym,amsmath,amsthm, amssymb}
\usepackage{hyperref}
\usepackage{a4wide}
\usepackage{marginnote}
\usepackage{color}



\newcommand{\ind}{\mathrm{1}\hskip -3.2pt \mathrm{I}} 
\newcommand{\id}{\mbox{\rm Id}}


\newtheorem{theorem}{Theorem}
\newtheorem{proposition}[theorem]{Proposition}

\newtheorem{corollary}[theorem]{Corollary}

\theoremstyle{remark} 



\def\tr{\mathop{\rm Tr}\nolimits}
\def\va{\mathop{\rm Var}\nolimits}

\def\R{\mathbb R}

\def\vol{\mathop{\rm vol}\nolimits}
\def\Ric{\mathop{\rm Ric}\nolimits}

\numberwithin{equation}{section}

\title{Dimensional variance inequalities of Brascamp-Lieb type and a local approach to dimensional Pr\'ekopa's theorem}

\author{ Van Hoang Nguyen\footnote{Institut de Math\'ematiques de Jussieu, Universit\'e Pierre et Marie Curie (Paris 6), 4 place Jussieu, 75252 Paris, France. Email: vanhoang@math.jussieu.fr} }

\begin{document}
\maketitle


\renewcommand{\thefootnote}{}

\footnote{Supported by the French ANR GeMeCoD, ANR 2011 BS01 007 01.}

\footnote{2010 \emph{Mathematics Subject Classification\text}: 26A51, 52A40.}

\footnote{\emph{Key words and phrases\text}: Dimensional variance inequalities, Brascamp-Lieb type inequalities, sharp weighted Poincar\'e inequalities, convex measures, $L^2$-method}

\renewcommand{\thefootnote}{\arabic{footnote}}
\setcounter{footnote}{0}

\begin{abstract}
We give a new approach, inspired by H\"ormander's $L^2$-method,  to weighted variance inequalities which extend results obtained by Bobkov and Ledoux. It provides in particular a local proof of the dimensional functional forms of the Brunn-Minkowski inequalities. We also present several applications of these variance inequalities, including reverse H\"older inequalities for convex functions, weighted Brascamp-Lieb inequalities and sharp weighted Poincar\'{e} inequalities for generalized Cauchy measures.  
\end{abstract}

\section{Introduction}

Our original motivation was to provide a local $L^2-$proof of the dimensional Pr\'{e}kopa inequality (Theorem~\ref{extensionofPrekopa} below). This inequality comes from a functional form of the Brunn-Minkowski inequality
\begin{equation}\label{eq:Brunn-Minkowski}
|A+B|^{\frac{1}{n}}\geq |A|^{\frac{1}{n}} + |B|^{\frac{1}{n}},
\end{equation}
where $A$ and $B$ are two Borel (later convex) subsets of $\R^n$, and
$ A+B=\{a+b: a\in A \text{ and } b\in B\}$.
Eventually, we end up establishing the  following two Theorems, which  are the main new results of the present paper. In the sequel, we fix a Euclidean structure $\langle \cdot , \cdot\rangle$ on $\R^n$ with respect to which Hessians $D^2 $ and gradients $\nabla$ are computed; 
recall the notation $\va_{\mu}(f):=\int f^2\, d\mu- (\int f\, d\mu)^2$ for the variance of a function $f$ with respect to a probability measure $\mu$.
 
\begin{theorem}\label{maintheorem1}
Given $\beta, r\in \R$ such that $ \beta > r + (n + \sqrt{n^2 + 4(r^2 -r)n})/2\in [n,+\infty)$, set 
$$ A(n,\beta,r) := \frac{\beta - n}{n} - \frac{(n - 1) r^2}{n(\beta - 2r)} > 0. $$
 Let $\varphi$ be a positive $C^2$ convex function defined on an open convex set $\Omega\subseteq\R^n$,  such that $d\mu_\beta = \varphi(x)^{-\beta}dx$ is a probability measure on $\Omega$. Then, for any locally Lipschitz $f\in L^2(\mu_\beta)$, setting $g = f\varphi^{1-r}$, we have
\begin{equation}\label{eq:convexmainresult}
(\beta - 2r +1)\va_{\mu_\beta}(f)\leq \int_\Omega \frac{\langle (D^2\varphi)^{-1}\nabla g,\nabla g\rangle}{\varphi}\, \varphi^{2r}\, d\mu_\beta + \frac{(1 - r)^2}{A(n,\beta,r)} \bigg(\int_\Omega f\, d\mu_\beta\bigg)^2. 
\end{equation}
\end{theorem}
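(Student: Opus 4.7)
My plan is to mount a Hörmander-style $L^2$ argument. Given smooth $f$ and a constant $c$ to be fixed later (with $c=\int f\, d\mu_\beta$ the natural choice), introduce the $\mu_\beta$-symmetric operator
$$ L_\beta u := \Delta u - \beta\, \langle\nabla\varphi,\nabla u\rangle/\varphi, $$
and solve the Poisson equation $L_\beta u = f-c$ in a suitable weighted Sobolev class. Integration by parts against $f$ yields $\int f(f-c)\, d\mu_\beta = -\int \langle\nabla f,\nabla u\rangle\, d\mu_\beta$, while the substitution $g := f\varphi^{1-r}$ provides the decomposition $\nabla f = \varphi^{r-1}\nabla g - (1-r) f\,\varphi^{-1}\nabla\varphi$, splitting the right-hand side into a gradient part $\int \varphi^{r-1}\langle\nabla g,\nabla u\rangle\, d\mu_\beta$ and a zeroth-order part $-(1-r)\int f\,\langle\nabla\varphi,\nabla u\rangle\,\varphi^{-1}\, d\mu_\beta$.

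I control the gradient part by a weighted Cauchy--Schwarz,
$$ \Big(\int \varphi^{r-1}\langle\nabla g,\nabla u\rangle\, d\mu_\beta\Big)^2 \leq \Big(\int \frac{\langle (D^2\varphi)^{-1}\nabla g,\nabla g\rangle}{\varphi}\,\varphi^{2r}\, d\mu_\beta\Big)\Big(\int \frac{\langle D^2\varphi\,\nabla u,\nabla u\rangle}{\varphi}\, d\mu_\beta\Big), $$
whose first factor is exactly the integral appearing in the conclusion. The second factor is bounded via the Bochner--Weitzenböck identity for $L_\beta$,
$$ \int (L_\beta u)^2\, d\mu_\beta = \int \|\nabla^2 u\|_{HS}^2\, d\mu_\beta + \beta\int \frac{\langle D^2\varphi\,\nabla u,\nabla u\rangle}{\varphi}\, d\mu_\beta - \beta\int \frac{|\langle\nabla\varphi,\nabla u\rangle|^2}{\varphi^2}\, d\mu_\beta, $$
combined with the pointwise dimensional estimate $(\Delta u)^2 \leq n\,\|\nabla^2 u\|_{HS}^2$ and the identity $\Delta u = L_\beta u + \beta\langle\nabla\varphi,\nabla u\rangle/\varphi$. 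Replacing $\|\nabla^2 u\|_{HS}^2$ by the lower bound $(\Delta u)^2/n$, expanding, and optimizing the auxiliary weight parameter left free in Cauchy--Schwarz yields the dimensional constant $A(n,\beta,r)$ as the coefficient linking $\int \langle D^2\varphi\,\nabla u,\nabla u\rangle/\varphi\, d\mu_\beta$ to $\int (f-c)^2\, d\mu_\beta$, modulo a cross term that is controlled by $\int |\langle\nabla\varphi,\nabla u\rangle|^2/\varphi^2\, d\mu_\beta$ itself.

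Choosing $c=\int f\, d\mu_\beta$ turns $\int f(f-c)\, d\mu_\beta$ into $\va_{\mu_\beta}(f)$. A final Cauchy--Schwarz applied to the zeroth-order part absorbs its $(f-c)$-component into the Bochner bound above, whereas the $c$-component, which is not cancelled by the variance, produces the residual $\frac{(1-r)^2}{A(n,\beta,r)}\big(\int f\, d\mu_\beta\big)^2$. Reading off the coefficient of $\va_{\mu_\beta}(f)$ gives $\beta-2r+1$; when $r=1$ the zeroth-order part vanishes and the statement reduces to the classical weighted Brascamp--Lieb inequality of Bobkov--Ledoux.

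The main obstacle will be the simultaneous optimization of the free weight parameters in the Cauchy--Schwarz and dimensional-trace steps so that the constants $\beta-2r+1$ and $(1-r)^2/A(n,\beta,r)$ emerge exactly, not merely up to an absolute factor; the algebra linking these free parameters is delicate, and each parameter is sensitive to the choice of weights. A secondary technical issue is the solvability of $L_\beta u = f-c$ and the justification of the integrations by parts under the sole assumption that $\mu_\beta$ is a probability measure on an open convex $\Omega$, which will likely require an approximation argument by compactly supported smooth $f$ and, if needed, by strictly convex $\varphi$.
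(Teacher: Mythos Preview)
Your overall framework (solve a Poisson equation, integrate by parts, apply Bochner and the trace inequality $(\Delta u)^2\le n\|D^2u\|_{HS}^2$) is the same as the paper's. But two concrete choices differ from the paper in ways that matter for obtaining the sharp constants, and your sketch does not show how to overcome them.

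First, the paper does \emph{not} work with your $\mu_\beta$-symmetric operator $L_\beta=\Delta-\beta\varphi^{-1}\langle\nabla\varphi,\nabla\cdot\rangle$. It introduces instead the $r$-dependent operator
\[
Lu=\varphi^r\Delta u-(\beta-r)\varphi^{r-1}\langle\nabla\varphi,\nabla u\rangle,
\]
which is $\varphi^r$ times the generator associated with $\mu_{\beta-r}$, and solves $Lu=f-\mu_\beta(f)$ (as a Neumann problem on a smooth bounded approximation of $\Omega$). The point of this choice is that the integration-by-parts and Bochner identities for $L$ all carry the weight $\varphi^{2r}$, so that the term $\int\varphi^{-1}\langle D^2\varphi\,\nabla u,\nabla u\rangle\,\varphi^{2r}\,d\mu_\beta$ appears with exactly the weight needed to pair with $\int\varphi^{-1}\langle(D^2\varphi)^{-1}\nabla g,\nabla g\rangle\,\varphi^{2r}\,d\mu_\beta$. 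With your $L_\beta$ the weights do not match, and your Bochner formula comes with a wrong-sign term $-\beta\int\varphi^{-2}|\langle\nabla\varphi,\nabla u\rangle|^2\,d\mu_\beta$ that you say is ``controlled by \ldots\ itself'' without explaining how; in the paper that cross term is eliminated via a separate \emph{identity} (not an inequality) expressing $\int\varphi^{2r-2}\langle\nabla\varphi,\nabla u\rangle^2\,d\mu_\beta$ in terms of $\int\|D^2u\|_{HS}^2\varphi^{2r}$, $\int(\Delta u)^2\varphi^{2r}$, $\int\varphi^{2r-1}\langle D^2\varphi\nabla u,\nabla u\rangle$ and $\int\varphi^{2r-1}\Delta u\,\langle\nabla\varphi,\nabla u\rangle$. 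That identity, specific to the operator $L$, is where $A(n,\beta,r)$ actually comes from.

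Second, the paper does not use an integral Cauchy--Schwarz step. It writes
\[
\va_{\mu_\beta}(f)=(1+\alpha)\!\int(f-\mu_\beta(f))Lu\,d\mu_\beta-\alpha\!\int(Lu)^2\,d\mu_\beta,\qquad \alpha=\tfrac{\beta-1}{\beta-2r+1},
\]
substitutes the Bochner expression for $\int(Lu)^2\,d\mu_\beta$, and then applies the two \emph{pointwise} inequalities $-2(\beta-r)\langle\nabla u,\nabla g\rangle-(\beta-r)^2\langle D^2\varphi\,\nabla u,\nabla u\rangle\le\langle(D^2\varphi)^{-1}\nabla g,\nabla g\rangle$ and $(\Delta u)^2\le n\|D^2u\|_{HS}^2$. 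The specific value of $\alpha$ is what produces the coefficient $\beta-2r+1$ on the variance; no further optimization is needed. Your plan leaves both the value of $\alpha$ and the ``auxiliary weight parameter'' unspecified and simply asserts that optimizing them yields the exact constants. For $r=0$ your $L_\beta$ coincides with the paper's $L$ and the scheme does go through, but for general $r$ the proposal as written does not exhibit the mechanism that produces $A(n,\beta,r)$, and I would regard that as a genuine gap rather than routine bookkeeping.
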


In general, this theorem is applied with $\Omega = \R^n$.  We also have a parallel "concave" case of the Theorem \ref{maintheorem1} which is:

\begin{theorem}\label{maintheorem2}
Given $\beta, r\in \R$ satisfying $ \beta > -r + (-n + \sqrt{n^2 + 4(r^2 -r)n})/2\in [-1,+\infty)$, set
$$ B(n,\beta,r) := \frac{\beta + n}{n} - \frac{(n - 1) r^2}{n(\beta + 2r)} > 0. $$
 Let $\varphi$ be a positive concave  $C^2$ function defined on a \emph{bounded} open convex set $\Omega\subset \R^n$ such that $d\nu_\beta = \varphi(x)^{\beta}\ind_{\Omega}(x)dx$ is a probability measure. Then, for any locally Lipschitz $f\in L^2(\nu_\beta)$, setting $g = f\varphi^{1-r}$, we  have
\begin{equation}\label{eq:concavemainresult}
(\beta + 2r -1)\va_{\nu_\beta}(f)\leq \int_\Omega \frac{\langle (-D^2\varphi)^{-1}\nabla g,\nabla g\rangle}{\varphi}\, \varphi^{2r}\, d\nu_\beta + \frac{(1 - r)^2}{B(n,\beta,r)} \bigg(\int_\Omega f\, d\nu_\beta\bigg)^2. 
\end{equation}
\end{theorem}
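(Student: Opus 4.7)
The plan is to mirror the Hörmander $L^{2}$-strategy used to prove Theorem~\ref{maintheorem1}, with the convex weight $\varphi^{-\beta}$ (convex $\varphi$) replaced by the concave weight $\varphi^{\beta}$ (concave $\varphi$), so that $-D^{2}\varphi \ge 0$ plays the role previously played by $D^{2}\varphi \ge 0$. I would introduce on $(\Omega,\nu_{\beta})$ the symmetric diffusion operator
\[
 L u := \Delta u + \beta\,\nabla\log\varphi\cdot\nabla u = \varphi^{-\beta}\,\d(\varphi^{\beta}\,\nabla u),
\]
so that $\int (Lu)\,v\,d\nu_{\beta} = -\int\nabla u\cdot\nabla v\,d\nu_{\beta}$ for sufficiently regular $u,v$. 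The Dirichlet form $\mathcal E(u,v):=\int\nabla u\cdot\nabla v\,d\nu_{\beta}$ on the mean-zero subspace of $L^{2}(\nu_{\beta})$ yields, via Lax-Milgram, a solution $u$ of the Poisson equation $Lu = f - \bar f$, where $\bar f:=\int f\,d\nu_{\beta}$. Since $\varphi$ may vanish on $\partial\Omega$, I would first solve on the regular region $\Omega_{\varepsilon}:=\{\varphi>\varepsilon\}$ with Neumann-type conditions and then pass to $\varepsilon\to 0$.

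Given $u$, integration by parts gives $\va_{\nu_{\beta}}(f) = -\int\nabla f\cdot\nabla u\,d\nu_{\beta}$. With the substitution $g = f\varphi^{1-r}$, so that $\nabla f = \varphi^{r-1}\nabla g-(1-r)f\,\varphi^{-1}\nabla\varphi$, this integral splits into two pieces. The first I would handle by Cauchy-Schwarz applied with the positive-semidefinite weight $-D^{2}\varphi/\varphi$, producing exactly $\bigl(\int\langle (-D^{2}\varphi)^{-1}\nabla g,\nabla g\rangle\,\varphi^{2r-1}\,d\nu_{\beta}\bigr)^{1/2}\bigl(\int\langle -D^{2}\varphi\,\nabla u,\nabla u\rangle/\varphi\,d\nu_{\beta}\bigr)^{1/2}$, which already matches the target right-hand side. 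The second piece $\int f\,\varphi^{-1}\nabla\varphi\cdot\nabla u\,d\nu_{\beta}$ I would rewrite using $\varphi^{-1}\nabla\varphi=\beta^{-1}\nabla\log\varphi^{\beta}$ and a further integration by parts, reducing it to a linear combination of $\va_{\nu_{\beta}}(f)$, $\bar f$, and auxiliary integrals in $u$ that will be absorbed.

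The core ingredient is the dimensional Bochner identity for $L$. The $\Gamma_{2}$-computation gives
\[
 \Gamma_{2}(u,u) = \|D^{2} u\|_{\mathrm{HS}}^{2} - \beta\,\langle D^{2}\log\varphi\cdot\nabla u,\nabla u\rangle = \|D^{2} u\|_{\mathrm{HS}}^{2} + \beta\,\frac{\langle -D^{2}\varphi\,\nabla u,\nabla u\rangle}{\varphi} + \beta\,\frac{(\nabla\varphi\cdot\nabla u)^{2}}{\varphi^{2}},
\]
whose three summands are all nonnegative under the hypotheses. Integrating against $\nu_{\beta}$ and using $\int(Lu)^{2}\,d\nu_{\beta} = \va_{\nu_{\beta}}(f)$, together with the trace inequality $\|D^{2} u\|_{\mathrm{HS}}^{2}\ge(\Delta u)^{2}/n$ and $\Delta u = Lu - \beta\,\nabla\varphi\cdot\nabla u/\varphi$, produces a lower bound on $\va_{\nu_{\beta}}(f)$ that is a quadratic form in $Y:=\int\langle -D^{2}\varphi\,\nabla u,\nabla u\rangle/\varphi\,d\nu_{\beta}$ and $Z:=\int\nabla\varphi\cdot\nabla u/\varphi\,d\nu_{\beta}$.

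The hard part will be the precise bookkeeping required to extract the sharp constants $\beta+2r-1$ and $1/B(n,\beta,r)$: combining the Cauchy-Schwarz split with the Bochner lower bound amounts to a constrained quadratic optimization in the auxiliary quantities $Y$ and $Z$, whose determinantal feasibility condition should coincide with $B(n,\beta,r)>0$ and hence with the stated hypothesis $\beta > -r + (-n+\sqrt{n^{2}+4(r^{2}-r)n})/2$. A secondary technical point is the justification of each integration by parts when $\varphi$ degenerates on $\partial\Omega$, handled via the $\Omega_{\varepsilon}$-truncation above together with dominated convergence as $\varepsilon\to 0$.
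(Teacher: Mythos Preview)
Your overall architecture---solve a Neumann--Poisson equation for $u$, combine integration by parts with a Bochner/$\Gamma_2$ identity, and exploit the positivity of $-D^2\varphi$ together with $(\Delta u)^2\le n\|D^2u\|_{\mathrm{HS}}^2$---is indeed the paper's strategy, and your regularization $\Omega_\varepsilon=\{\varphi>\varepsilon\}$ is a fine substitute for the paper's approximation by a smooth convex $\{\rho<0\}$ (the boundary term has the right sign for the same reason: the sublevel sets are convex). However, two specific choices in the paper's execution differ from yours, and they are precisely what make the sharp constants $\beta+2r-1$ and $B(n,\beta,r)$ fall out.

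First, the paper does \emph{not} work with the standard generator of $\nu_\beta$. It uses instead the $r$-dependent operator
\[
Lu \;=\; \varphi^{r}\,\Delta u \;+\;(\beta+r)\,\varphi^{r-1}\langle\nabla\varphi,\nabla u\rangle,
\]
which is $\varphi^r$ times the generator associated to the auxiliary measure $\varphi^{\beta+r}\,dx$. The Poisson equation $Lu=f-\nu_\beta(f)$ is solved for \emph{this} $L$, and the Bochner computation of $\int(Lu)^2\,d\nu_\beta$ (commuting $\nabla$ with $L$) then produces every term already carrying the weight $\varphi^{2r}$, matching the right-hand side of~\eqref{eq:concavemainresult}. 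With your $r$-independent generator the $\Gamma_2$ identity has no such weight, and pushing all of the $r$-dependence through the substitution $g=f\varphi^{1-r}$ alone makes the bookkeeping substantially worse; in particular your auxiliary quantity $Y=\int\langle -D^2\varphi\,\nabla u,\nabla u\rangle\varphi^{-1}\,d\nu_\beta$ is not the one that the target inequality naturally pairs with.

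Second, rather than a Cauchy--Schwarz split followed by an unspecified optimization in $(Y,Z)$, the paper writes the tautology
\[
\va_{\nu_\beta}(f)\;=\;(1+\alpha)\!\int(f-\bar f)\,Lu\,d\nu_\beta\;-\;\alpha\!\int(Lu)^2\,d\nu_\beta
\]
with a \emph{specific} $\alpha$ (the concave analogue of $\alpha=(\beta-1)/(\beta-2r+1)$ in the convex proof), expands the first integral by parts and the second by the Bochner identity, and after algebraic reductions arrives at an exact \emph{identity} for $(\beta+2r-1)\va_{\nu_\beta}(f)$ (the analogue of~\eqref{squareintegration3}). Only then are the two pointwise inequalities $-2\langle v,w\rangle-\langle Hv,v\rangle\le\langle H^{-1}w,w\rangle$ (with $H=-D^2\varphi$) and $(\Delta u)^2\le n\|D^2u\|_{\mathrm{HS}}^2$ applied, followed by a single completion of the square that yields $(1-r)^2/B(n,\beta,r)$. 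So the ``hard part'' you defer to the end is exactly what the paper front-loads via the choice of $L$ and $\alpha$; without those choices it is not evident your optimization closes with the stated constants rather than something weaker.
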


One can easily check that equality holds in~\eqref{eq:convexmainresult} and~\eqref{eq:concavemainresult} when $f= \langle \nabla \varphi, z_0\rangle\varphi^{r-1}$, for some fixed $z_0\in \R^n$.

In order to explain and motivate these results, but also to justify and understand the geometric nature of the conditions on the parameters, we need to step back a moment to the Brunn-Minkowski inequalities.

 Inequality \eqref{eq:Brunn-Minkowski} says that $|\cdot|^{1/n}$ is concave; in the terminology recalled below, it means that the Lebesgue measure is $1/n$-concave.
Using the homogeneity of Lebesgue measure, one can easily check that the inequality~\eqref{eq:Brunn-Minkowski} is equivalent to the following \emph{a-dimensional} inequality: for every $A,B\in \R^n$ and $t\in [0,1]$,
\begin{equation}\label{eq:BMineq}
| tA+(1-t)B|\geq |A|^t|B|^{1-t},\quad \forall t\in \left [0,1\right] .
\end{equation}
Inequality~\eqref{eq:BMineq} says that Lebesgue measure $|\cdot|$ on $\R^n$ is log-concave. More generally, a Borel measure $\mu$ on $\R^n $ is said to be log-concave if it satisfies
$$ \mu(tA+(1-t)B)\geq \mu(A)^t\mu(B)^{1-t}, $$
for any $0\leq t\leq 1$ and $A, B$  two Borel sets of $\R^n$. 
The \emph{dimensional} analogues of this property are defined as follows.

We introduce first, for $\kappa \in \R \cup\{\pm \infty\}$, $t\in [0,1]$ and $a,b \ge 0$, the $\kappa$-mean
$$
\mathcal{M}_{t}^{\kappa}(a,b)=
(ta^{\kappa}+(1-t)b^{\kappa})^{\frac{1}{\kappa}}
$$
with the convention that $\mathcal{M}_t^{\kappa}(a,b)=0$ if $a b = 0$.
The extremal cases are defined in the limit by $\mathcal M_t^{0}(a,b)= a^t b^{1-t}$, $\mathcal M_t^{-\infty}(a,b)= \min\{a,b\}$, and  $\mathcal M_t^{+\infty}(a,b)= \max\{a,b\}$.
A Borel measure $\mu$ on $\R^n$ is said to be \emph{$\kappa-$concave}, where $-\infty\leq \kappa\leq +\infty$, if it verifies the following inequality for all Borel sets $A, B\subset\R^n$ and $t\in [0,1]$:
\begin{equation}\label{eq:concavemeasure}
\mu (tA+(1-t)B)\geq \mathcal{M}_{t}^{\kappa}(\mu(A),\mu(B)).
\end{equation}
When $\kappa = 0$, then $\mu$ is a log-concave measure, and the case $\kappa = -\infty$ corresponds to the largest (by H\"older's inequality) class of measures, called \emph{convex\text} or \emph{hyperbolic\text}  measures.

The characterization of $\kappa$-concave measures is given by the functional versions of the Brunn-Minkowski inequality. The functional form of the a-dimensional inequality~\eqref{eq:BMineq} is the celebrated \emph{Pr\'ekopa-Leindler inequality}~\cite{Lein,Pre1,Pre2}, whereas the dimensional inequality~\eqref{eq:Brunn-Minkowski} is associated to a family of inequalities, known as the \emph{Borell-Brascamp-Lieb inequalities} (BBL in short)  obtained in~\cite{BraLie,Bor2}.  Actually, many of the applications of the Brunn-Minkowski inequality and of  their functional forms, the Pr\'ekopa-Leindler and BBL inequalities, can be obtained from the particular case  where the sets $A$ and $B$ are convex, or when the functions under study are convex. So we shall state these particular cases only. The reader can find more background and applications in~\cite{Gard,Led}. 

The functional form of the a-dimensional inequality~\eqref{eq:BMineq} for convex sets is the celebrated Pr\'ekopa inequality~\cite{Pre2}, which is the following particular case of the Pr\'ekopa-Leindler inequality.

\begin{theorem}{\rm(Pr\'ekopa's inequality)}\label{Prth}
Let $\varphi: \R^{n+1}\to \R\cup\{+\infty\}$ be a convex function. Then the function $\phi$ defined on $\R$  by
$$ e^{-\phi(t)}=\int_{\R^n}e^{-\varphi(t,x)}\, dx, $$
is convex on $\R$.
\end{theorem}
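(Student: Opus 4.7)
\medskip

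\noindent\textbf{Proof plan.} The plan is to establish convexity of $\phi$ by showing $\phi''(t)\ge 0$ on the (open and convex) set $\{t:\phi(t)<+\infty\}$. After a standard regularization — replacing $\varphi$ by $\varphi_\varepsilon(t,x)=(\varphi*\rho_\varepsilon)(t,x)+\varepsilon(t^2+|x|^2)/2$ on an interior truncation of its effective domain — I may and do assume $\varphi$ is $C^\infty$, strictly convex, and has enough growth at infinity for all the integrations and differentiations below to be licit. The result for the original $\varphi$ then follows by pointwise limits as $\varepsilon\downarrow 0$, using that the class of convex functions on $\R$ is closed under such limits.

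Next, set $Z(t)=\int_{\R^n}e^{-\varphi(t,x)}\,dx$ and introduce the probability measure $\mu_t$ with density $e^{-\varphi(t,x)}/Z(t)$. Differentiating twice under the integral sign yields
$$\phi'(t)=\int_{\R^n}\partial_t\varphi\,d\mu_t,\qquad \phi''(t)=\int_{\R^n}\partial_t^2\varphi\,d\mu_t-\va_{\mu_t}(\partial_t\varphi),$$
so the desired bound $\phi''(t)\ge 0$ is equivalent to the variance inequality
$$\va_{\mu_t}(\partial_t\varphi)\le \int_{\R^n}\partial_t^2\varphi\,d\mu_t.$$

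To produce this bound I would use that $\mu_t$ is log-concave on $\R^n$ (because $x\mapsto\varphi(t,x)$ is convex), and apply the classical Brascamp-Lieb variance inequality
$$\va_{\mu_t}(f)\le \int_{\R^n}\langle (D_x^2\varphi)^{-1}\nabla_x f,\nabla_x f\rangle\,d\mu_t,$$
to $f(x):=\partial_t\varphi(t,x)$. What remains is the pointwise inequality
$$\langle (D_x^2\varphi)^{-1}\nabla_x\partial_t\varphi,\nabla_x\partial_t\varphi\rangle\le \partial_t^2\varphi,$$
which is precisely the Schur complement condition associated with the block decomposition
$$D^2\varphi=\begin{pmatrix}\partial_t^2\varphi & (\nabla_x\partial_t\varphi)^T\\ \nabla_x\partial_t\varphi & D_x^2\varphi\end{pmatrix}\succeq 0,$$
valid at every $(t,x)$ by convexity of $\varphi$ on $\R^{n+1}$. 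Integrating it against $\mu_t$ chains the Schur and Brascamp-Lieb estimates into $\phi''(t)\ge 0$, as desired.

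The main obstacle is the Brascamp-Lieb variance inequality itself: invoked as a black box, the computation above reduces Pr\'ekopa to one line plus Schur, but in the spirit of the paper's local $L^2$-approach one would prefer to re-prove it from scratch via H\"ormander's method, namely by solving an auxiliary Poisson-type equation in a weighted $L^2$-space attached to $\mu_t$ and invoking a Bochner-type identity that produces the Hessian $D_x^2\varphi$ on the right-hand side. Making this $L^2$ step rigorous — existence and integrability of the solution, justification of the integrations by parts — is where the initial regularization providing strict convexity and polynomial growth becomes indispensable; everything else in the argument is bookkeeping.
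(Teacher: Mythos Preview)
Your proposal is correct and is precisely the local approach the paper advocates: the paper states Theorem~\ref{Prth} as background without giving a self-contained proof, but it explicitly identifies the Brascamp--Lieb inequality~\eqref{eq:BLinequality} as ``the local form of Pr\'ekopa's inequality'' (citing~\cite{CorKlar}) and carries out in detail the parallel computation for the dimensional case---computing $\phi''$, reducing to a variance inequality for $\mu_t$, and invoking the Schur-complement bound $\partial_t^2\varphi\ge \langle (D_x^2\varphi)^{-1}\nabla_x\partial_t\varphi,\nabla_x\partial_t\varphi\rangle$. Your sketch, including the regularization step and the suggestion to recover Brascamp--Lieb itself via H\"ormander's $L^2$ method, matches this scheme exactly.
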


Note that we recover indeed the geometric result~\eqref{eq:BMineq} when $A$ and $B$ are convex sets of $\R^n$ by taking $e^{-\varphi(t,x)}= \mathbf 1_{(1-t)A+tB}(x)$. 

The corresponding dimensional version, relevant for the study of $\kappa$-concave measures with $\kappa\neq 0$,  is the following particular case of the BBL inequality. Accordingly, we shall call it the dimensional Pr\'ekopa or Pr\'ekopa-BBL inequality. It contains two cases.
%
\begin{theorem}{\rm(Pr\'{e}kopa-BBL or dimensional Pr\'{e}kopa inequality)}\label{extensionofPrekopa}

First case: Let $\varphi: \R^{n+1} \to (0,\infty]$ be a positive convex function  and let $\beta>n$. Then the function $\phi$ defined on $\R$ by
$$ \phi(t)= \bigg{(}\int_{\R^n}\varphi(t,x)^{-\beta}\, dx\bigg{)}^{-\frac{1}{\beta-n}},$$
is convex.

Second case:  let $\varphi$ be a positive concave function on $\Omega$,  a \emph{bounded} open convex subset of $\R^{n+1}$, and let  $\beta\geq 0$. Then the function $\psi$ on $\R$ defined by
$$ \psi(t)=\bigg{(}\int_{\Omega(t)}\varphi(t,x)^{\beta}\, dx\bigg{)}^{\frac{1}{\beta+n}}, $$
is concave, where $\Omega(t)=\{x\in \R^n: (t,x)\in \Omega\}$.
\end{theorem}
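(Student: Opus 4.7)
The strategy is to prove both cases by the same local computation: I show that $\phi''(t) \ge 0$ (resp.\ $\psi''(t) \le 0$) at every $t$, and reduce the sign to an integral inequality that is precisely the content of Theorem~\ref{maintheorem1} (resp.\ Theorem~\ref{maintheorem2}) evaluated at the distinguished parameter $r = 0$. I focus on the convex case for concreteness. Fix $t\in\R$ and abbreviate $\bar\varphi(x) = \varphi(t,x)$, $u(x) = \varphi_t(t,x)$, $w(x) = \varphi_{tt}(t,x)$, $F(t) = \int_{\R^n}\bar\varphi^{-\beta}\,dx$, so that $\phi = F^{-1/(\beta-n)}$ and $d\mu_\beta = F^{-1}\bar\varphi^{-\beta}dx$. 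Writing $m_1 = \int (u/\bar\varphi)\,d\mu_\beta$ and $m_w = \int (w/\bar\varphi)\,d\mu_\beta$, differentiation under the integral sign gives $F'/F = -\beta m_1$ and $F''/F = \beta(\beta+1)\int(u/\bar\varphi)^2 d\mu_\beta - \beta m_w$; a straightforward algebraic manipulation with $\phi = F^{-1/(\beta-n)}$ then shows that $\phi''(t)\ge 0$ is equivalent to
\[
m_w \;\ge\; (\beta + 1)\,\va_{\mu_\beta}\!\left(\tfrac{u}{\bar\varphi}\right) \;-\; \frac{n}{\beta - n}\,m_1^{\,2}.
\]

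The next step is to establish this inequality from two independent inputs. On the one hand, the joint convexity of $\varphi$ on $\R^{n+1}$ (after perturbing so that $D_x^2\bar\varphi$ is invertible) gives the pointwise Schur complement bound $\varphi_{tt}(t,x) \ge \langle (D_x^2\bar\varphi)^{-1}\nabla u,\nabla u\rangle$, and integration yields $m_w \ge \int \langle (D^2\bar\varphi)^{-1}\nabla u,\nabla u\rangle \bar\varphi^{-1}\,d\mu_\beta$. On the other hand, applying Theorem~\ref{maintheorem1} with $r = 0$ and test function $f = u/\bar\varphi$ (so that $g = f\,\bar\varphi^{1-r} = u$) gives, since $A(n,\beta,0) = (\beta-n)/n$,
\[
(\beta + 1)\,\va_{\mu_\beta}(f) \;\le\; \int_{\R^n}\frac{\langle (D^2\bar\varphi)^{-1}\nabla u, \nabla u\rangle}{\bar\varphi}\,d\mu_\beta \;+\; \frac{n}{\beta - n}\,m_1^{\,2}.
\]
The admissibility condition of Theorem~\ref{maintheorem1} at $r = 0$ reads $\beta > n$, which matches exactly the hypothesis of the first case of Theorem~\ref{extensionofPrekopa}. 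Chaining the two displays gives $\phi''(t)\ge 0$.

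The concave case is fully parallel. Setting $H(t) = \int_{\Omega(t)}\bar\varphi^{\beta}\,dx$ and $\psi = H^{1/(\beta+n)}$, the same scheme shows that $\psi''(t)\le 0$ reduces to $m_w \le -(\beta - 1)\,\va_{\nu_\beta}(u/\bar\varphi) + \frac{n}{\beta + n}\,m_1^{\,2}$; concavity of $\varphi$ then gives $\varphi_{tt} \le -\langle (-D_x^2\bar\varphi)^{-1}\nabla u,\nabla u\rangle$, and Theorem~\ref{maintheorem2} with $r = 0$ provides the matching lower bound on the weighted Dirichlet integral, with constant $B(n,\beta,0) = (\beta + n)/n$ and admissibility $\beta > 0$ (the boundary value $\beta = 0$ corresponds to the classical Brunn--Minkowski inequality on the sections $\Omega(t)$).

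The step that I expect to require the most care is reconciling the regularity and genericity assumptions: Theorem~\ref{extensionofPrekopa} allows merely convex (resp.\ concave) $\varphi$, while the variance inequality needs $\varphi\in C^2$ with $D_x^2\bar\varphi$ invertible, and the concave case additionally needs $\bar\varphi$ to vanish on $\partial\Omega(t)$ so that differentiating $H(t)$ through the moving section produces no boundary term. I would handle both by a two-level approximation: first convolve $\varphi$ with a smooth mollifier and add $\varepsilon(t^2 + |x|^2)$ to enforce strict convexity of the Hessian, and, in the concave case, replace $\varphi$ by $(\varphi - \varepsilon)_+$ on a slightly shrunk convex subdomain to enforce boundary vanishing. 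The proof then runs on the smooth strictly convex (concave) surrogate, and the convexity of $\phi$ (resp.\ concavity of $\psi$) passes to the limit as $\varepsilon \downarrow 0$ by dominated convergence.
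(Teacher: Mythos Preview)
Your proposal is correct and follows essentially the same route as the paper: the introduction explicitly computes $\phi''$ as in~\eqref{eq:twicedifferentiation}, reduces $\phi''(t)\ge 0$ to the variance inequality~\eqref{eq:localversion} (i.e.\ Theorem~\ref{maintheorem1} at $r=0$) combined with the Schur-complement bound $\partial_{tt}^2\varphi \ge \langle (D_x^2\varphi)^{-1}\nabla_x\partial_t\varphi,\nabla_x\partial_t\varphi\rangle$, and handles regularity by approximation. The concave case is treated in the same way via~\eqref{eq:localversion1}, with the endpoint $\beta=0$ recovered separately, exactly as you indicate.
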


Of course, in the first statement of Theorem~\ref{extensionofPrekopa}, by modifying $\varphi$ if necessary, we can replace $\R^{n+1}$ by any open convex subset $\Omega$ and hence the integration is taken on the sections $\Omega(t)$. 

Let us mention, for completeness, the geometric consequences of these inequalities in term of Brunn-Minkowski inequalities.  Note that one needs the general BBL inequality if one wants~\eqref{eq:concavemeasure} for all sets; with the particular case recalled above, the reader can check that one gets such inequality for convex sets $A$ and $B$. It follows from the BBL inequality, and from a reverse statement of Borell~(see~\cite{Bor1,Bor2}), that a measure $\mu$ on $\R^n$ absolutely continuous with the Lebesgue measure is $\kappa$-concave~\eqref{eq:concavemeasure}  \emph{if and only if}  $\kappa\le \frac1n$ and $\mu$ is supported on some (open) convex subset $\Omega\subseteq\R^n$ where it has a positive density $p(x)$ which satisfies, for all $t\in (0,1)$,
\begin{equation}\label{eq:densityinequality}
p(tx+(1-t)y)\geq \mathcal{M}_t^{\kappa_n}(p(x),p(y)), \quad \forall\, x, y\in \Omega,
\end{equation} 
where $\kappa_n=\frac{\kappa}{1-n\kappa}\in [-\frac1n,+\infty]$ (equivalently, $\kappa=\frac{\kappa_n}{1+n\kappa_n}\in [-\infty, \frac1n]$). In particular, $\mu$ is log-concave if and only if it has a log-concave density ($\kappa=\kappa_n=0$), which is of course consistent with Pr\'ekopa's inequality. Note that that the Lebesgue measure has the best possible concavity $\kappa=\frac1n$ (which gives the Brunn-Minkowski inequality\eqref{eq:Brunn-Minkowski}) among convex measures, since a constant function satisfies~\eqref{eq:densityinequality} with $\kappa_n=+\infty$.

This description suggests two different behaviors, since,  depending on the sign of $\kappa_n$, $p^{\kappa_n}$ is convex or concave (observe that $\kappa_n$ is nonnegative if and only if $\kappa\in [0, \frac1n]$). It was also noticed by  Bobkov~\cite{Bobkov} that in the case $\kappa \ge 0$, the measures have bounded support. Since these two situations are present all along the paper (and already in the theorems above), let us clearly identify them:
 
{\bf Case 1}: This corresponds to $\kappa \leq 0$. We set   $\beta =-\frac{1}{\kappa_n}= n-\frac{1}{\kappa}\geq n$ and we work with densities $p(x)=\varphi(x)^{-\beta}$  where $\varphi$ is a convex function on $\R^n$ or on a subset $\Omega$.
The typical examples are the (generalized) Cauchy probability measures given by
\begin{equation}\label{eq:Cauchydensity}
d\tau_{\beta}=\frac{1}{Z_{\beta}}(1+|x|^2)^{-\beta}dx,\quad \beta >\frac{n}{2},
\end{equation}
  where $Z_\beta$ is a normalizing constant
$ Z_{\beta}:=\int_{\R^n}(1+|x|^2)^{-\beta}dx= \pi^{\frac{n}{2}}\frac{\Gamma(\beta-\frac{n}{2})}{\Gamma(\beta)}$.

{\bf Case 2}: This corresponds to $0< \kappa\leq \frac{1}{n}$. We set $\beta= \frac{1}{\kappa_n} \in [0,+\infty)$, although later we will also allow $\beta\in (-1, +\infty)$,  and we work with densities $p(x) = \varphi(x)^{\beta}$ where $\varphi$ is a concave function with compact support $\Omega\subset \R^n$. In this case, the typical examples are the probability measures given by 
\begin{equation}\label{eq:Cauchytypedensity}
d\tau_{\sigma,\beta}=Z_{\sigma,\beta}^{-1}(\sigma^2-|x|^2)^{\beta}\ind_{\{|x|\leq \sigma\}}dx,\quad \beta \geq 0, \sigma >0,
\end{equation}
with normalizing constant $Z_{\sigma,\beta}$ given by
$ Z_{\sigma,\beta}=\sigma^{2\beta+n}\pi^{\frac{n}{2}}\frac{\Gamma(\beta+1)}{\Gamma(\beta+\frac{n}{2}+1)}$.

Of major interests for us are the Poincar\'e-Sobolev inequalities that can be deduced from the functional forms of the Brunn-Minkowski inequalities above, as done by Bobkov and Ledoux in~\cite{BobLed, BobLed1, BobLed2} by amplifying a linearization argument due to  Maurey~\cite{Mau}. In~\cite{BobLed1} Bobkov and Ledoux  explained how to derive from the Pr\'ekopa-Leindler inequality the so-called variance Brascamp-Lieb inequality~\cite{BraLie}  which states that for a log-concave probability measure $d\mu=e^{-V}dx$, with $V$ smooth strictly convex on $\R^n$, one has, for every locally Lipschitz function $f$, 
 \begin{equation}\label{eq:BLinequality}
\va_{\mu}(f)\leq \int_{\R^n}\langle (D^2V)^{-1}\nabla f,\nabla f\rangle\, d\mu.
\end{equation}

The dimensional counterpart of this inequality, recently obtained in~\cite{BobLed} as a consequence of the BBL inequality is as follows. Let $\beta > n$ and let $\mu_\beta$ be a probability measure on a convex domain $\Omega$ of the form $d\mu_\beta=\varphi(x)^{-\beta}dx$ where $\varphi$ is a positive convex function on $\Omega$. Then, for any smooth function $f$ on $\Omega$, setting $g=\varphi f$, we have
\begin{equation}\label{eq:localversion}
(\beta+1)\va_{\mu_\beta}(f)\leq \int \frac{\langle(D^2\varphi)^{-1}\nabla g,\nabla g \rangle}{\varphi}\, d\mu_\beta +\frac{n}{\beta-n}\bigg(\int f\, d\mu_\beta\bigg)^2.
\end{equation}
Note that this inequality corresponds to the particular case $r=0$ in~\eqref{eq:convexmainresult} so that the connection between our main theorems and Brunn-Minkowski inequalities is now materializing. 


Interestingly enough, the particular cases the Pr\'ekopa-Leindler and BBL inequalities recalled in Theorems~\ref{Prth} and~\ref{extensionofPrekopa} are sufficient to derive  the two variance inequalities above.
This is somehow at the heart of the local approaches to  Pr\'ekopa's inequality (Theorem~\ref{Prth}), amounting to compute $\phi''(t)$, as explained in~\cite{CorKlar}.
Our original goal was to give such a local approach to the dimensional version (Theorem~\ref{extensionofPrekopa}).
As expected, the local variance inequality associated to Theorem~\ref{extensionofPrekopa} (for the first statement) is the inequality~\eqref{eq:localversion} obtained by Bobkov and Ledoux as a consequence of the BBL inequality. 

So let us first explain in details  the equivalence between the variance inequality~\eqref{eq:localversion} (together with~\eqref{eq:localversion1} below) and the results of Theorem~\ref{extensionofPrekopa}. 
Because there are two cases, corresponding to  $\kappa\leq 0$ or $\kappa >0$, there will be two local variance inequalities. We will treat  the case $\kappa\leq 0$; the same arguments hold for the case $\kappa>0$. By a direct computation, the second derivative of the function $\phi(t)$ of Theorem~\ref{extensionofPrekopa} satisfies
\begin{align}\label{eq:twicedifferentiation}
\frac{\beta-n}{\beta}\frac{\phi''(t)}{\phi(t)}&= \int \frac{\partial_{tt}\varphi(t,x)}{\varphi(t,x)}\, d\mu_t(x)+\frac{n}{\beta-n}\bigg{(}\int \frac{\partial_t\varphi(t,x)}{\varphi(t,x)}\, d\mu_t(x)\bigg{)}^2\notag \\ 
&\ \ \ \ -(\beta+1)\va_{\mu_t}\bigl{(}\frac{\partial_t\varphi(t,\cdot)}{\varphi(t,\cdot)}\bigl{)},
\end{align}
where $\mu_t$ is the probability measure on $\R^n$ given by
$$d\mu_t(x) = \frac{\varphi(t,x)^{-\beta}\, dx}{\int_{\R^n}\varphi(t,\cdot)^{-\beta}}.$$ 
In order to prove~\eqref{eq:localversion}, we can assume for simplicity that $\Omega$ is relatively compact and that $f$ and $\varphi$ are smooth in $\overline{\Omega}$. For $g=\varphi f$, and  $\epsilon>0$ , the natural extension of $\varphi$ with derivative $g$ (to which we add a small uniformly convex factor for convenience) is the function 
$$ \varphi_\epsilon(t,x) :=\varphi(x)+tg(x)+\frac{t^2}{2}\langle(D^2\varphi(x))^{-1}\nabla g(x),\nabla g(x)\rangle +\frac{\epsilon}{2}(|x|^2+t^2). $$
This function is convex on $\Omega\times (-a,a)$ for some $a>0$ small enough depending on $\varphi, g, \epsilon$ and $\Omega$, since $D^2 \varphi(t,x)_{|t=0} \ge \epsilon\, \id$ on $\Omega$, and it satisfies 
$${\varphi_\epsilon} _{|t=0}= \varphi(x)+\epsilon |x|^2/2, \quad \partial_t {\varphi_\epsilon}_{|t=0}= g(x), \quad \partial^2_{tt} {\varphi_\epsilon}_{|t=0}= \langle(D^2\varphi(x))^{-1}\nabla g(x),\nabla g(x)\rangle  + \epsilon .$$
Theorem~\ref{extensionofPrekopa} tell us that the corresponding $\phi=\phi_\epsilon$ is convex. Combining $\phi''(0)\geq 0$ with~\eqref{eq:twicedifferentiation} for $\varphi_\epsilon(t,x)$, and then letting $\epsilon\to 0$, we get, by uniform convergence on $\Omega$,  the inequality~\eqref{eq:localversion}.
Conversely, in order to prove the first statement of Theorem~\ref{extensionofPrekopa}, we can assume by approximation that $\varphi$ is smooth and strictly convex in $x$. Then, if the inequality~\eqref{eq:localversion} holds,  applying it with $g:=\partial_t \varphi_{|t=0}$ and using the fact that
$$\partial^2_{tt}\varphi \geq \langle (D_x^2\varphi)^{-1}\nabla_x\partial_t\varphi,\nabla_x\partial_t\varphi\rangle,$$ 
when $\varphi$ is a convex function of $(t,x)$ (strictly convex in $x$), we  get exactly that $\phi''(t)\geq 0$.  We thus have shown that the inequality~\eqref{eq:localversion} is equivalent to the dimensional Pr\'ekopa's inequality in the case $\kappa\leq 0$. 

Similarly, in the case $\kappa>0$, the local form of the dimensional Pr\'{e}kopa inequality is the following variance inequality: Let $\Omega$ be a bounded open convex subset of $\R^n$ and let $d\mu=\varphi(x)^{\beta}dx$ be a probability measure on $\Omega$, where $\varphi$ is a positive concave function on $\Omega$ and $\beta \geq 0$. Then for any smooth function $f$ on $\Omega$, setting $g=f\varphi$ one has:
\begin{equation}\label{eq:localversion1}
(\beta-1)\va_{\mu}(f)\leq \int_{\Omega}\frac{\langle (-D^2\varphi)^{-1}\nabla g,\nabla g\rangle}{\varphi}\, d\mu +\frac{n}{n+\beta}\bigg(\int_{\Omega}f\, d\mu\bigg)^2.
\end{equation} 

Therefore, it is sufficient to prove the inequalities~\eqref{eq:localversion} and~\eqref{eq:localversion1} to  prove the dimensional Pr\'ekopa's dimenensioanal inequalities of Theorem \ref{extensionofPrekopa}. 
 Our  proof  is based on H\"{o}rmander's $L^2-$method which is known to be useful in the context of variance inequalities and Pr\'ekopa's Theorem. Indeed,  the H\"{o}rmander's $L^2-$method was first used for the local proof of Pr\'ekopa type inequalities by Cordero-Erausquin in~\cite{Cor},  in connection with  Berndtsson's complex generalization~\cite{Berndtsson1998} of  Pr\'ekopa's theorem. As we saw, and as explained in~\cite{CorKlar}, the variance Brascamp-Lieb inequality~\eqref{eq:BLinequality} can clearly be identified as the local form of Pr\'ekopa's inequality, and this variance inequality can of course  be proved by H\"ormander's $L^2$ method (it is exactly a real version of H\"ordmander's $L^2$-estimate \cite{H}).
 
The dimensional versions under study require, however, a few new arguments, as we shall see. 
 And it allows  for the more general statements given in  Theorems~\ref{maintheorem1} and~\ref{maintheorem2}. The inequalities~\eqref{eq:localversion} and~\eqref{eq:localversion1} are particular cases of these theorems when we pick $r = 0$. The case $r=1$ is also of particular interest, as it amounts to weighted Brascamp-Lieb inequalities of the form
$$ \va_{\mu_\beta}(f)\leq \frac{1}{\beta -1}\int_\Omega\langle (D^2\varphi)^{-1}\nabla f,\nabla f\rangle \varphi\, d\mu_\beta.
$$
One can recover from this the classical Brascamp-Lieb inequality~\eqref{eq:BLinequality} for log-concave probability measures.

%

The rest of this paper is organized as follows. In the main, next section, we give the $L^2-$proof  for the variance inequalities~\eqref{eq:convexmainresult} and~\eqref{eq:concavemainresult}.
In Section \S 3, we explain how these inequalities imply reverse H\"older inequalities for negative-$p$-norms $\|\varphi\|_{L^{-p}(dx)}$ of a convex function (Case 1), and $p$-norms in the case of a concave function (Case 2), as obtained by Borell in~\cite{Bor3};  we also present a sharp bound for $\va_{\mu}(V)$ when $d\mu = e^{-V(x)} \, dx$ is a log-concave measure on $\R^n$. Section \S4 discusses weighted Brascamp-Lieb inequalities with application to log-concave measures.
In  Section \S 5, we derive sharp weighted Poincar\'{e}  inequalities for generalized Cauchy type measures. In the last section \S 6, after some general comments, we  explain how the results of the paper automatically extend to a Riemannian manifold $M$ provided on introduces the correct Barky-Emery type tensor associated to the Hessian of $\varphi$ and the Ricci curvature of $M$.


\section{The $L^2-$proof of Theorems \ref{maintheorem1} and \ref{maintheorem2}}
In this section, we  give the proof of the Theorems \ref{maintheorem1} and~\ref{maintheorem2}.
It is inspired by  H\"{o}rmander's $L^2$-duality method. Note that this gives, in particular,  a new proof of the variance inequality~\eqref{eq:localversion} due to Bobkov and Ledoux, and of the inequality~\eqref{eq:localversion1}. 
We will detail the proof of Theorem \ref{maintheorem1}. The proof of Theorem \ref{maintheorem2} is completely similar.

\begin{proof}[Proof of Theorem \ref{maintheorem1}.]
Although the general argument is easy to follow, some of the formulas below are a bit long. The reader is encouraged to set $r=0$ in the present proof, which corresponds to the case of inequality~\eqref{eq:localversion}. Formulas are nicer, and all the interesting ingredients are already at work in this particular case. Also, some of the formulas are complicated by the fact that we have a boundary term. Making formally $\Omega= \R^n$ also simplifies things significantly.

 In order to prove the inequality~\eqref{eq:convexmainresult}, we can assume , by standard approximation arguments, that the domain $\Omega$ is bounded with $C^\infty-$smooth boundary, and $\Omega$ is given by some $C^\infty$-smooth, convex function $\rho: \R^n\to \R$,
$$ \Omega=\{x: \rho(x) < 0\}, \quad\text{ and }\quad \nabla\rho\not=0 \text{ on }\partial\Omega.$$  
We shall denote
$$ \nu(x) = \frac{\nabla\rho(x)}{|\nabla\rho(x)|}$$
the outer normal vector to $\partial\Omega$ at the point $x\in \partial\Omega$. We can also assume that $f$ and $\varphi$ are $C^\infty$ smooth in $\overline \Omega$.

Given $\beta, r$ satisfying the condition of Theorem \ref{maintheorem1}, it is easy to check that $\beta > 2r$. Let us introduce the operator $L$ on $L^2(\mu_\beta)$ given by 
$$ Lu = \varphi^r\Delta u - (\beta - r)\varphi^{r-1}\langle\nabla\varphi, \nabla u\rangle. $$
It is well defined for functions in $C^2(\overline{\Omega})$ (we don't need to discuss the precise domain of $L$ here).
%
Integration by parts gives us that, for all $u\in C^2(\overline{\Omega})$, and $v\in C^1(\overline{\Omega})$,
\begin{equation}\label{eq:integrationbypart}
 \int_\Omega v(x)Lu(x)\, d\mu_\beta = -\int_\Omega \langle\nabla u(x),\nabla v(x)\rangle\, \varphi(x)^ rd\mu_\beta + \int_{\partial\Omega}\frac{\partial u(x)}{\partial \nu(x)}v(x)\varphi(x)^{-\beta + r}dx.
\end{equation}
Next, we need to commute $\nabla$ and $L$. It is readily  checked that for any $1\leq i\leq n$ and $u\in C^\infty(\overline{\Omega})$,
$$\partial_iLu=L\partial_iu+r\varphi^{r-1}\partial_i\varphi\Delta u -(\beta-r)\varphi^{r-1}\sum_{j=1}^n\partial_{ij}\varphi\partial_ju - (\beta-r)(r-1)\varphi^{r-2}\langle\nabla\varphi,\nabla u\rangle\partial_i\varphi .$$
Hence, if $u$ is smooth on $\overline{\Omega}$ and $\frac{\partial u}{\partial \nu} = 0 $ on $\partial\Omega$, one has
$$\int_\Omega (Lu)^2 \, d\mu_\beta = - \int_\Omega \langle\nabla Lu,\nabla u\rangle\, \varphi^r d\mu_\beta,$$
and therefore
\begin{align*}
\int_\Omega (Lu)^2 \, d\mu_\beta& =-\sum_{i=1}^n\int_\Omega L(\partial_iu)\, \varphi^r\partial_iu\, d\mu_\beta -r\int_\Omega \frac{\langle\nabla \varphi,\nabla u\rangle}{\varphi}\Delta u\, \varphi^{2r}d\mu_\beta\\
&\quad + (\beta-r)\int_\Omega\frac{\langle D^2\varphi\nabla u,\nabla u\rangle}{\varphi}\varphi^{2r} d\mu_\beta +(\beta-r)(r-1)\int_\Omega\frac{\langle\nabla\varphi,\nabla u\rangle^2}{\varphi^2}\varphi^{2r} d\mu_\beta\\
&= r\int_\Omega \frac{\langle D^2u\nabla u,\nabla\varphi\rangle}{\varphi}\varphi^{2r} d\mu_\beta -r \int \frac{\langle\nabla \varphi,\nabla u\rangle}{\varphi}\Delta u\, \varphi^{2r} d\mu_\beta\\
&\quad + (\beta-r)\int_\Omega\frac{\langle D^2\varphi\nabla u,\nabla u\rangle}{\varphi}\varphi^{2r} d\mu_\beta +(\beta-r)(r-1)\int_\Omega\frac{\langle\nabla\varphi,\nabla u\rangle^2}{\varphi^2}\varphi^{2r} d\mu_\beta\\
&\quad + \int_\Omega ||D^2u||_{HS}^2\, \varphi^{2r} d\mu_\beta-\int_{\partial\Omega}\frac{\langle D^2u\nabla u,\nabla\rho\rangle}{|\nabla\rho|}\varphi^{-\beta + 2r}dx.
\end{align*}
Moreover,
\begin{align*}
\int_\Omega\frac{\langle D^2u\nabla u,\nabla\varphi\rangle}{\varphi}\varphi^{2r} d\mu_\beta &= \int_\Omega \frac{\langle\nabla \varphi,\nabla u\rangle}{\varphi}\Delta u\, \varphi^{2r}d\mu_\beta + \frac{1}{\beta- 2r} \int_\Omega ||D^2u||_{HS}^2\, \varphi^{2r} d\mu_\beta\\ 
&\ \ - \frac{1}{\beta-2r}\int_\Omega (\Delta u)^2\varphi^{2r}d\mu_\beta -\frac{1}{\beta - 2r}\int_{\partial\Omega}\frac{\langle D^2u\nabla u,\nabla\rho\rangle}{|\nabla\rho|}\varphi^{-\beta + 2r}dx.
\end{align*}
Since $\langle\nabla u(x),\nabla\rho(x)\rangle = 0$ on $\partial\Omega$, we have
$$ \langle D^2u(x)\nabla u(x),\nabla\rho(x)\rangle = - \langle D^2\rho(x)\nabla u(x), \nabla u(x)\rangle,\quad\forall \ x\in \partial\Omega. $$
Therefore,
\begin{align}\label{eq:square}
\int_\Omega (Lu)^2\, d\mu& = \frac{\beta - r}{\beta - 2r} \int_\Omega ||D^2u||_{HS}^2\, \varphi^{2r}d\mu_\beta - \frac{r}{\beta-2r}\int_\Omega (\Delta u)^2\varphi^{2r}d\mu_\beta \notag\\ 
& \ \ \ \  + (\beta-r)\int_\Omega\frac{\langle D^2\varphi\nabla u,\nabla u\rangle}{\varphi}\varphi^{2r} d\mu_\beta  + (\beta-r)(r-1)\int_\Omega\frac{\langle\nabla\varphi,\nabla u\rangle^2}{\varphi^2}\varphi^{2r} d\mu_\beta \notag \\
&\ \ \ \ + \frac{\beta - r}{\beta -2r}\int_{\partial\Omega}\frac{\langle D^2\rho\nabla u,\nabla u\rangle}{|\nabla\rho|}\varphi^{-\beta + 2r}dx.
\end{align}

So fix now a smooth function $f$ on $\overline \Omega$ and denote $\mu_\beta(f) := \int_\Omega fd\mu_\beta$. 
We will use a classical fact concerning solution of the Laplace equation in $L^2(\mu_{\beta-r})$ where 
$$d\mu_{\beta-r}= \varphi ^r \, d\mu_\beta = \varphi^{-(\beta-r)} \, dx = e^{-\log(\varphi^{\beta-r}(x))}\, dx$$ is a measure (not normalized) with smooth positive density on $\overline \Omega$ (this will be the reason for which it is convenient to assume $\Omega$ bounded and smooth). Namely, it follows from classical theory of elliptic equations (see \cite[Theorem $2.5$]{KM} and the references therein), that given a smooth function $F$ on $\overline \Omega$ with $\int F \, d\mu_{\beta-r} = 0$, there exists a function $u\in C^{\infty}(\overline{\Omega})$ with $\frac{\partial u(x)}{\partial \nu(x)} = 0$ on $\partial\Omega$ such that
$$Nu:= \Delta u - \nabla[\log(\varphi^{\beta-r})]\cdot \nabla u = F .$$
We apply this result to $F:=(f-\mu_\beta (f)) \times \varphi^{-r} $, and we get a function $u\in C^{\infty}(\overline{\Omega})$ with $\frac{\partial u(x)}{\partial \nu(x)} = 0$ on $\partial\Omega$ 
such that 
$$Lu = \varphi^r Nu= f- \mu_\beta(f).$$
We will use $u$ to dualize the  inequality. 

Set $\alpha = (\beta - 1)/(\beta -2r + 1)$. We have
\begin{align*}
 \va_{\mu_\beta}(f) 
&= (1 + \alpha)\int_\Omega (f - \mu_\beta(f))Lu\, d\mu_\beta - \alpha \int_\Omega (Lu)^2\, d\mu_\beta.
\end{align*}
Since $g = f\varphi^{1-r}$, one has
$$ \nabla f =\varphi^{r-1}\nabla g + (r - 1)Lu \frac{\nabla\varphi}{\varphi} + (r - 1)\mu_\beta(f) \frac{\nabla\varphi}{\varphi}.$$
Hence we have
\begin{align}\label{squareintegration}
\va_{\mu_\beta}(f)
& = - (1 + \alpha)\int_\Omega \langle \nabla f,\nabla u\rangle\varphi^r d\mu_\beta - \alpha\int_\Omega (Lu)^2\, d\mu_\beta \notag\\ 
& = -(1 + \alpha)\int_\Omega \frac{\langle\nabla u,\nabla g\rangle}{\varphi}\varphi^{2r} d\mu_\beta - (1 + \alpha)(r - 1)\int_\Omega Lu\frac{\langle\nabla\varphi,\nabla u\rangle}{\varphi}\varphi^r d\mu_\beta \notag \\
&\qquad - \alpha \frac{\beta - r}{\beta - 2r} \int_\Omega ||D^2u||_{HS}^2\, \varphi^{2r}d\mu_\beta - (1 + \alpha)(r - 1)\mu_\beta(f) \int_\Omega \frac{\langle\nabla\varphi,\nabla u\rangle}{\varphi}\varphi ^rd\mu_\beta\notag\\
&\quad - \alpha(\beta-r)(r-1)\int_\Omega\frac{\langle\nabla\varphi,\nabla u\rangle^2}{\varphi^2}\varphi^{2r} d\mu_\beta - \alpha(\beta-r)\int_\Omega\frac{\langle D^2\varphi\nabla u,\nabla u\rangle}{\varphi}\varphi^{2r} d\mu_\beta\notag\\
&\quad  + \alpha\frac{r}{\beta-2r}\int_\Omega (\Delta u)^2\varphi^{2r}d\mu_\beta -\alpha \frac{\beta - r}{\beta -2r}\int_{\partial\Omega}\frac{\langle D^2\rho\nabla u,\nabla u\rangle}{|\nabla\rho|}\varphi^{-\beta + 2r}dx \notag\\
&= -(1 + \alpha)\int_\Omega \frac{\langle\nabla u,\nabla g\rangle}{\varphi}\varphi^{2r} d\mu_\beta - \alpha(\beta-r)\int_\Omega\frac{\langle D^2\varphi\nabla u,\nabla u\rangle}{\varphi}\varphi^{2r} d\mu_\beta \notag\\
&\qquad - \alpha \frac{\beta - r}{\beta - 2r} \int_\Omega ||D^2u||_{HS}^2\varphi^{2r} d\mu_\beta - (1 + \alpha)(r - 1)\mu_\beta(f) \int_\Omega \frac{\langle\nabla\varphi,\nabla u\rangle}{\varphi}\varphi^r d\mu_\beta\notag\\
&\quad  -(1 + \alpha)(r - 1)\int_\Omega \Delta u\frac{\langle\nabla\varphi,\nabla u\rangle}{\varphi}\varphi^{2r} d\mu_\beta + (\beta-r)(r-1)\int_\Omega\frac{\langle\nabla\varphi,\nabla u\rangle^2}{\varphi^2}\varphi^{2r} d\mu_\beta\notag \\
&\quad  +\alpha\frac{r}{\beta-2r}\int_\Omega (\Delta u)^2\varphi^{2r}d\mu_\beta -\alpha\frac{\beta - r}{\beta -2r}\int_{\partial\Omega}\frac{\langle D^2\rho\nabla u,\nabla u\rangle}{|\nabla\rho|}\varphi^{-\beta + 2r}dx .
\end{align}
We now calculate the term $\int_\Omega\frac{\langle\nabla\varphi,\nabla u\rangle^2}{\varphi^2}\varphi^{2r} d\mu_\beta$. Denote $\gamma = \beta -2r$, it follows from the definition of $L$ that
\begin{align*}
\int_\Omega\frac{\langle\nabla\varphi,\nabla u\rangle^2}{\varphi^2}\varphi^{2r} d\mu_\beta& = \frac{1}{(\beta - r)^2}\bigg( \int_\Omega (Lu)^2 d\mu_\beta - \int (\Delta u)^2\varphi^{2r} d\mu_\beta\bigg) \\
&\ + \frac{2}{\beta - r}\int_\Omega \Delta u\, \frac{\langle\nabla\varphi,\nabla u\rangle}{\varphi}\varphi^{2r} d\mu_\beta\\
& = \frac{1}{\gamma(\gamma + r)}\int_\Omega ||D^2u||_{HS}^2\, \varphi^{2r}d\mu_\beta - \frac{1}{\gamma(\gamma + r)}\int_\Omega (\Delta u)^2\varphi^{2r} d\mu_\beta \\ 
&\ + \frac{1}{\gamma + r} \int_\Omega\frac{\langle D^2\varphi\nabla u,\nabla u\rangle}{\varphi}\varphi^{2r} d\mu_\beta + \frac{2}{\gamma + r}\int_\Omega \Delta u\frac{\langle\nabla\varphi,\nabla u\rangle}{\varphi}\varphi^{2r} d\mu_\beta \\
&\ +\frac{r - 1}{\gamma + r}\int_\Omega \frac{\langle\nabla\varphi,\nabla u\rangle^2}{\varphi^2}\varphi^{2r} d\mu_\beta + \frac{1}{\gamma(\gamma + r)}\int_{\partial\Omega}\frac{\langle D^2\rho\nabla u,\nabla u\rangle}{|\nabla\rho|}\varphi^{-\beta + 2r}dx. 
\end{align*}
Equivalently, we have
\begin{align}\label{abcd}
\int_\Omega\frac{\langle\nabla\varphi,\nabla u\rangle^2}{\varphi^2}\varphi^{2r} d\mu_\beta& =\frac{1}{\gamma(\gamma + 1)}\int_\Omega ||D^2u||_{HS}^2\, \varphi^{2r}d\mu_\beta - \frac{1}{\gamma(\gamma + 1)}\int_\Omega (\Delta u)^2\varphi^{2r} d\mu_\beta \notag\\ 
&\ + \frac{1}{\gamma + 1} \int_\Omega\frac{\langle D^2\varphi\nabla u,\nabla u\rangle}{\varphi}\varphi^{2r} d\mu_\beta + \frac{2}{\gamma + 1}\int_\Omega \Delta u\frac{\langle\nabla\varphi,\nabla u\rangle}{\varphi}\varphi^{2r} d\mu_\beta\notag \\ 
&\ + \frac{1}{\gamma(\gamma + 1)}\int_{\partial\Omega}\frac{\langle D^2\rho\nabla u,\nabla u\rangle}{|\nabla\rho|}\varphi^{-\beta + 2r}dx.
\end{align}
It follows from~\eqref{eq:integrationbypart} that $\int_\Omega Lu\, d\mu_{\beta} = 0$, or equivalently
\begin{equation}\label{eq:vho}
\int_\Omega \frac{\langle\nabla\varphi,\nabla u\rangle}{\varphi}\varphi^r d\mu_\beta = \frac{1}{\beta - r}\int_\Omega \Delta u\, \varphi^r d\mu_\beta.
\end{equation}
Combining~\eqref{squareintegration},~\eqref{abcd} and~\eqref{eq:vho} with the value $\alpha = (\beta - 1)/(\beta - 2r + 1)$, one has
\begin{align}\label{squareintegration3}
(\beta - 2r + 1)\va_{\mu_\beta}(f)& =  -2(\beta - r)\int_\Omega \frac{\langle\nabla u,\nabla g\rangle}{\varphi}\varphi^{2r} d\mu_\beta - (\beta - r)^2\int_\Omega\frac{\langle D^2\varphi\nabla u,\nabla u\rangle}{\varphi}\varphi^{2r} d\mu_\beta \notag \\ 
& \quad - \frac{(\beta - r)^2}{\beta - 2r}\int_\Omega ||D^2u||_{HS}^2\, \varphi^{2r}d\mu_\beta + \frac{\beta -2r + r^2}{\beta - 2r}\int_\Omega (\Delta u)^2\varphi^{2r}d\mu_\beta\notag \\
&\quad - 2(r - 1)\mu_\beta(f) \int_\Omega \varphi^r\Delta u \, d\mu_\beta\notag \\
&\quad - \frac{(\beta - r)^2}{\beta - 2r}\int_{\partial\Omega}\frac{\langle D^2\rho\nabla u,\nabla u\rangle}{|\nabla\rho|}\varphi^{-\beta + 2r}dx.
\end{align}
If $\beta, r$ satisfy the condition of Theorem \ref{maintheorem1}, then $\beta > 2r$, hence
$$ \frac{(\beta - r)^2}{\beta - 2r}\int_{\partial\Omega}\frac{\langle D^2\rho\nabla u,\nabla u\rangle}{|\nabla\rho|}\varphi^{-\beta + 2r}dx \geq 0.$$ 
By using the pointwise estimates, $2\langle v,w\rangle -\langle Hv,v\rangle \leq \langle H^{-1}w,w\rangle$ and $(\tr(Q))^2 \leq n||Q||_{HS}^2$ for two vector $v, w\in \R^n$, $H$ a positive $n\times n$ matrix and $Q$ a symmetric $n\times n$ matrix, one gets
$$ -2(\beta - r)\langle\nabla u,\nabla g\rangle - (\beta - r)^2\langle D^2\varphi\nabla u,\nabla u\rangle \leq \langle (D^2\varphi)^{-1}\nabla g,\nabla g\rangle, $$
and, since $\beta > 2r$,
$$  - \frac{(\beta - r)^2}{\beta - 2r}||D^2u||_{HS}^2 + \frac{\beta -2r + r^2}{\beta - 2r}(\Delta u)^2\leq -A(n,\beta,r) (\Delta u)^2. $$
Moreover, one has $A(n,\beta,r) > 0$ when $\beta > r + (n + \sqrt{n^2+ 4(r^2 - r)n})/2 $ and so  
\begin{align*}
(\beta -2r +1)\va_{\mu_\beta}(f) &\leq \int_\Omega \frac{\langle (D^2\varphi)^{-1}\nabla g,\nabla g\rangle}{\varphi}\varphi^{2r}d\mu_\beta - A(n,\beta,r)\int_\Omega (\varphi^r\Delta u)^2d\mu_\beta\\
&\qquad -2( r- 1)\mu_\beta(f) \int_\omega \varphi^r\Delta u\, d\mu_\beta\\ 
&\leq \int_\Omega \frac{\langle (D^2\varphi)^{-1}\nabla g,\nabla g\rangle}{\varphi}\varphi^{2r}d\mu_\beta + \frac{(1 - r)^2}{A(n,\beta, r)}\bigg(\int_\Omega f\, d\mu_\beta\bigg)^2.
\end{align*}
This finishes the proof of Theorem \ref{maintheorem1}.

\end{proof}


\section{Reverse H\"older inequalities and convexity}

We give here some very elementary applications of Theorem~\ref{maintheorem1} and Theorem \ref{maintheorem2} obtained by taking $g=1$ in the inequalities. We shall detail the applications of Theorem \ref{maintheorem1}, and state some results without proof for Theorem \ref{maintheorem2}. 

So let us take $\beta > r + (n + \sqrt{n^2 + 4(r^2 - r)n})/ 2$ and a convex function $\varphi > 0$ on an open convex set $\Omega\subseteq \R^n$ (we drop the normalization $\int \varphi(x)^{-\beta}dx=1$). Setting $d\mu_\beta(x) = \frac{\varphi^{-\beta} dx}{\int \varphi^{-\beta}}\, dx$, we can rewrite the inequality~\eqref{eq:convexmainresult} as follows: when $\varphi$ is smooth and $f$ is a locally Lipschitz function $f\in L^2(\mu_\beta)$, we have
\begin{equation}\label{eq:main3}
R(f)\le \frac{1}{\beta -2r +1}\int_\Omega \frac{\langle (D^2\varphi)^{-1}\nabla g,\nabla g\rangle}{\varphi}\, d\mu_\beta,
\end{equation}
with $g=\varphi^{1 - r} f$,  and
\begin{equation}\label{eq:defR}
R(f):=\int_\Omega f^2\, d\mu_\beta -\big(1 + \frac{(1 - r)^2}{(\beta -2r + 1)A(n,\beta,r)}\big)\bigg(\int_\Omega f\, d\mu_\beta\bigg)^2.
\end{equation}
Observe that if we take the $g$ identically one in \eqref{eq:main3}, we get that $R(\varphi^{r-1})\le 0$.
We deduce:

\begin{proposition}\label{Phiconcave}
Let $\Omega\subseteq \R^n$ be a convex open set and $\varphi$ be a positive convex function on $\Omega$. If $\beta > r + (n +\sqrt{n^2 + 4(r^2 - r)n})/2$, then
\begin{equation}\label{eq:concave}
\int_\Omega \varphi^{-\beta} dx\int_\Omega \varphi^{-\beta-2(1 - r)} dx\leq \bigl(1 + \frac{(1 - r)^2}{(\beta -2r + 1)A(n,\beta,r)}\bigl)\bigg(\int_\Omega\varphi^{-\beta-1+ r} dx\bigg)^2.
\end{equation}
In particular (case $r=0)$, setting
\begin{equation}\label{eq:Psifunction}
\Psi(\beta) := \ln\bigg(\prod_{i=1}^n(\beta - i)\int_\Omega\varphi^{-\beta}dx\bigg)
\end{equation}
we have
\begin{equation}\label{eq:psi}
\Psi(\beta)+\Psi(\beta+2)\leq 2\Psi(\beta +1), \quad\forall\, \beta>n.
\end{equation}
\end{proposition}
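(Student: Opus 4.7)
The plan is to specialize inequality \eqref{eq:main3} to the constant test function $g \equiv 1$, whose gradient vanishes identically. Since $g = \varphi^{1-r} f$, this corresponds to $f = \varphi^{r-1}$, and the right-hand side of \eqref{eq:main3} drops to zero. What remains is the purely algebraic inequality $R(\varphi^{r-1}) \leq 0$, where $R$ is defined in \eqref{eq:defR}. Expanding this using $d\mu_\beta = Z^{-1}\varphi^{-\beta}\,dx$ with $Z := \int_\Omega \varphi^{-\beta}\,dx$, and clearing the normalization factors by multiplying through by $Z^2$, yields exactly \eqref{eq:concave}. This is the whole content of the first part; no further work is needed.

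For the second assertion I specialize to $r = 0$, so $A(n,\beta,0) = (\beta-n)/n$, and the constant in \eqref{eq:concave} simplifies to
$$1 + \frac{n}{(\beta+1)(\beta-n)} = \frac{\beta(\beta+1-n)}{(\beta+1)(\beta-n)}.$$
Writing $I_s := \int_\Omega \varphi^{-s}\,dx$, the inequality thus reads $I_\beta\, I_{\beta+2} \leq \frac{\beta(\beta+1-n)}{(\beta+1)(\beta-n)}\, I_{\beta+1}^2$. On the other side, I compute the polynomial prefactors appearing in $\Psi$: after cancelling the $(n-1)$ common terms $(\beta-1),\dots,(\beta+1-n)$ shared between the three products, a short telescoping calculation gives
$$\frac{\prod_{i=1}^n(\beta-i)\;\prod_{i=1}^n(\beta+2-i)}{\bigl(\prod_{i=1}^n(\beta+1-i)\bigr)^2} = \frac{(\beta-n)(\beta+1)}{\beta(\beta+1-n)},$$
which is precisely the reciprocal of the constant above. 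Taking logarithms and summing the two pieces gives $\Psi(\beta) + \Psi(\beta+2) - 2\Psi(\beta+1) \leq 0$, which is \eqref{eq:psi}.

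The only step that is not an immediate consequence of \eqref{eq:main3} is justifying its use for an arbitrary positive convex $\varphi$ on a general open convex $\Omega$, since Theorem \ref{maintheorem1} was derived under smoothness hypotheses and (in the proof) bounded smooth $\Omega$. This is handled by a routine approximation: replace $\varphi$ by the mollification $\varphi_\epsilon + \epsilon|x|^2/2$ to obtain strictly convex $C^\infty$ approximants, exhaust $\Omega$ by an increasing sequence of bounded smooth convex subdomains, and pass to the limit by monotone convergence (assuming finiteness of the relevant integrals, which is implicit in the statement). The algebraic telescoping of the shifted products is the only mildly clever step, and it is entirely elementary; I do not foresee a genuine obstacle.
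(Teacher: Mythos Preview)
Your proposal is correct and follows essentially the same route as the paper: apply \eqref{eq:main3} with $g\equiv 1$ (equivalently $f=\varphi^{r-1}$) to obtain $R(\varphi^{r-1})\le 0$, unwind the normalization to get \eqref{eq:concave}, and then specialize to $r=0$ and verify that \eqref{eq:psi2} is equivalent to \eqref{eq:psi} via the telescoping of the shifted products. The paper's proof is terser on the last step, but the argument is identical.
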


\begin{proof}
By approximation, we can assume that the convex function $\varphi$ is smooth and strictly convex on $\Omega$. Then~\eqref{eq:concave} is exactly the property $R(\varphi^{r-1})\le 0$ that we deduced from plugging $g=1$ in~\eqref{eq:main3}. When $r=0$ the inequality rewrites as 
\begin{equation}\label{eq:psi2}
\int \varphi^{-\beta} dx\int \varphi^{-\beta-2} dx\leq \frac{\beta(\beta-n+1)}{(\beta+1)(\beta-n)}\bigg(\int\varphi^{-\beta-1} dx\bigg)^2,
\end{equation}
which is equivalent to~\eqref{eq:psi}.
\end{proof}

It is interesting to note that there is equality in~\eqref{eq:psi}-\eqref{eq:psi2} when $\varphi$ comes from a $1$-homogeneous function, for instance in the following way. When $\Omega = \R^n$, if we take $\varphi(x)=(1+J_C(x))$ whith $J_C$ being the gauge of a convex body $C\subset \R^n$, then equality holds in~\eqref{eq:psi}-\eqref{eq:psi2}. Indeed, one then has for $\beta>n$,  $\int \varphi^{-\beta}= c_{n,\beta} |C|$ where $c_{n,\beta}$ depends on $n$ and $\beta$ only, and therefore can be computed using $\varphi(x) = 1+ |x|$, for which one gets the relation~\eqref{eq:psi} by elementary calculus.  Note that this also shows that the constant $A(n,\beta,0)=\beta/(\beta-n)$ is optimal in~\eqref{eq:localversion}. In the case $\Omega$ is bounded, the argument works also when $C$ is chosen to be a multiple of $\overline\Omega$.

Inequality~\eqref{eq:psi} suggests that $\Psi$ might be concave and is reminiscent of the Berwald type inequalities obtained by Borell (in the Case 2, see below). Let us point out that the concavity of $\Psi$ is  stated (without proof) by Bobkov and Madiman in~\cite{BobMad1}. A weaker, though useful,  concavity can be deduced from~\eqref{eq:concave} as follows. 
Let us define the function $\psi$ on $(n,\infty)$ by
$$\psi(\beta)=\ln \bigg(\int_\Omega \varphi^{-\beta}dx\bigg).$$
Inequality~\eqref{eq:concave} is equivalent to
\begin{equation}\label{eq:logconcave2}
\psi(\beta)+\psi(\beta+2(1 - r)) - 2\psi(\beta + 1 -r)\leq \ln\bigg(1 + \frac{(1 - r)^2}{(\beta -2r + 1)A(n,\beta,r)}\bigg), 
\end{equation}
for all $\beta> r + (n +\sqrt{n^2 + 4(r^2 - r)n})/2$.  Since $r + (n +\sqrt{n^2 + 4(r^2 - r)n})/2\to n+ 1$ when $r\to 1$, we have for any $\beta > n+ 1$,  that inequality~\eqref{eq:logconcave2} holds for all $r$ which is close enough to $1$. Dividing the two sides of~\eqref{eq:logconcave2} by $(1-r)^2$ and then letting $r\to 1$, we get 
\begin{equation}\label{eq:sederivative}
\psi''(\beta)\leq \frac{n(\beta -2)}{(\beta -1)^2(\beta - n -1)},\quad\forall\, \beta > n+1.
\end{equation}
Therefore we have an upper bound for second derivative of the convex function $\psi$ on $(n+1,\infty)$. Moreover, it is readily checked that~\eqref{eq:sederivative} is equivalent to the concavity of the  function 
\begin{equation}\label{eq:Phifunction}
\Phi(\beta) = \ln\bigg((\beta -1)\int_\Omega \varphi^{-\beta}dx\bigg) -\frac{n-1}{n}\ln\bigg( \frac{(\beta -1)^{\beta -1}}{(\beta - n - 1)^{\beta - n -1}}\bigg).
\end{equation}
It is possible to improve inequality~\eqref{eq:sederivative} in particular cases, when $\varphi$ is smooth and strictly convex. Indeed,  by using  inequality~\eqref{eq:mainoptimization} with functions of the form  $f = \varphi^{\alpha(r-1)}$, $\alpha \not=1$, we  can get the following inequality
$$ \psi''(\beta)\leq \frac{W(\varphi, \beta)}{1 + W(\varphi, \beta)}\cdot \frac{n(\beta -2)}{(\beta -1)^2(\beta -n -1)},$$
with
$ W(\varphi, \beta ) := \frac{(\beta -1)(\beta -n - 1)}{n(\beta -2)\int_\Omega\varphi^{-\beta}}\int_\Omega\frac{\langle (D^2\varphi)^{-1}\nabla\varphi,\nabla\varphi\rangle}{\varphi}\varphi^{-\beta}\, dx$. 
This improves the bound~\eqref{eq:sederivative} when  $W(\varphi , \beta) < \infty$.

Inequality~\eqref{eq:sederivative} is weaker, in general, than the concavity of $\Psi$, except in dimension $n=1$ where $\Phi=\Psi$. Nonetheless,  it allows for a sharp variance estimate improving a result of Bobkov and Madiman. 

%
%

\begin{corollary}
Let $d\mu=e^{-V(x)}dx$ be a log-concave probability measure on $\R^n$. Then
\begin{equation}\label{eq:varianceofinformation}
\va_{\mu}(V)=\int V(x)^2e^{-V(x)}dx-\bigg(\int V(x) e^{-V(x)}dx\bigg)^2\leq n.
\end{equation}
\end{corollary}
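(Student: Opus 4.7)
The idea is to realize the log-concave measure $d\mu = e^{-V}dx$ as the $\beta\to\infty$ limit of Case 1 measures, exploiting the classical identity $(1+V/\beta)^{-\beta}\to e^{-V}$, and to transfer the bound~\eqref{eq:sederivative} to this limit.

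After shifting (variance is invariant under translating $V$ by a constant) and standard smoothing, I may assume that $V$ is smooth, strictly convex and $V\geq 0$. Since $e^{-V}$ is a probability density, $V$ is coercive, so for all $\beta$ larger than some $\beta_0>n+1$, the positive smooth strictly convex function $\varphi_\beta(x):=1+V(x)/\beta$ satisfies $Z_\beta:=\int\varphi_\beta^{-\beta}\,dx<\infty$. Set $\mu_\beta := Z_\beta^{-1}\varphi_\beta^{-\beta}\,dx$ and
$$
\psi_\beta(t):=\ln\int\varphi_\beta^{-t}\,dx.
$$
A direct computation gives $\psi_\beta''(t)=\var_{\mu_\beta^{(t)}}(\ln\varphi_\beta)$, where $\mu_\beta^{(t)}\propto\varphi_\beta^{-t}\,dx$. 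Applying~\eqref{eq:sederivative} to $\varphi_\beta$ at $t=\beta$ yields
$$
\var_{\mu_\beta}\bigl(\ln(1+V/\beta)\bigr)\leq \frac{n(\beta-2)}{(\beta-1)^2(\beta-n-1)},
$$
and multiplying by $\beta^2$ gives
$$
\var_{\mu_\beta}\bigl(\beta\ln(1+V/\beta)\bigr)\leq \frac{\beta^2 n(\beta-2)}{(\beta-1)^2(\beta-n-1)}\longrightarrow n\quad (\beta\to\infty).
$$

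It then suffices to show $\var_{\mu_\beta}(\beta\ln(1+V/\beta))\to \var_\mu(V)$. Elementary calculus shows that, pointwise, $\beta\mapsto\beta\ln(1+V/\beta)$ is increasing to $V$ and hence $\beta\mapsto\varphi_\beta^{-\beta}$ is decreasing to $e^{-V}$. By monotone convergence, $Z_\beta\searrow 1$ and $\mu_\beta$ converges to $\mu$. Moreover, for $\beta\geq\beta_0$ one has the pointwise domination $\varphi_\beta^{-\beta}\leq\varphi_{\beta_0}^{-\beta_0}$ together with $0\leq\beta\ln(1+V/\beta)\leq V$; choosing $\beta_0$ large enough that $V^2\varphi_{\beta_0}^{-\beta_0}$ is integrable (possible since $V$ is coercive convex, so $\varphi_{\beta_0}^{-\beta_0}$ decays like $V^{-\beta_0}$ at infinity), dominated convergence yields convergence of both the first and second moments of $\beta\ln(1+V/\beta)$ under $\mu_\beta$ to those of $V$ under $\mu$. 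Consequently $\var_\mu(V)\leq n$.

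The main obstacle is this final passage to the limit, where one needs uniform integrability of $V^2$ against $\mu_\beta$; the monotonicity of $\varphi_\beta^{-\beta}$ in $\beta$ and the coercivity forced by log-concavity supply exactly the domination required. (One could alternatively invoke Fatou's lemma to obtain $\var_\mu(V)\leq \liminf_\beta\var_{\mu_\beta}(\beta\ln\varphi_\beta)\leq n$, but the dominated-convergence argument is cleaner and also shows that $\var_\mu(V)$ is automatically finite.)
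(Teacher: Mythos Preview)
Your argument is correct, and it shares with the paper the key idea of applying the bound~\eqref{eq:sederivative} to a one-parameter family of convex potentials and letting the parameter go to infinity. The difference lies in the choice of the family. You take $\varphi_\beta = 1+V/\beta$, which forces you to control the convergence $\mu_\beta\to\mu$ and $\beta\ln(1+V/\beta)\to V$ through a somewhat delicate dominated-convergence argument. The paper instead takes $\varphi = e^{V/\beta_0}$. The payoff is that then $\varphi^{-\beta_0}=e^{-V}$ \emph{exactly}, so $\mu_{\beta_0}=\mu$ and $\ln\varphi=V/\beta_0$; inequality~\eqref{eq:sederivative} at $\beta=\beta_0$ becomes directly
\[
\var_\mu(V)=\beta_0^2\,\var_\mu(\ln\varphi)\le \beta_0^2\cdot\frac{n(\beta_0-2)}{(\beta_0-1)^2(\beta_0-n-1)},
\]
and the only limit to take is the elementary one on the right-hand side as $\beta_0\to\infty$. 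This sidesteps all the monotonicity and integrability verifications in your proof. Both routes are valid; the paper's is shorter because the exponential substitution matches the target measure at every finite stage rather than only in the limit.
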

\begin{proof}
Note that with the notation $\psi(\beta)=\log\int \varphi^{-\beta}$ we have
 $ \psi''(\beta) = \va_{\mu_{\beta}}(\ln \varphi), $where $\mu_{\beta}$ is  probability measure defined by
$$d\mu_{\beta}=\frac{\varphi(x)^{-\beta}dx}{\int e^{-\beta}dx}.$$
In our case, let $V(x)$ be a convex function on $\R^n$ such that $\int e^{-V}dx=1$. Fix $\beta_0 > n+1$ and apply the inequality~\eqref{eq:sederivative} to the convex function $\varphi = e^{V/\beta_0}$ at $\beta=\beta_0$. We get 
$$ \int V(x)^2e^{-V(x)}dx-\bigg(\int V(x)e^{-V(x)}dx\bigg)^2\leq  \beta_0^2\frac{n(\beta_0 -2)}{(\beta_0 -1)^2(\beta_0 -n -1)}.$$
Letting $\beta_0$ tend to infinity, one gets the following variance inequality for $V$
$$ \int V(x)^2e^{-V(x)}dx- \bigg(\int V(x)e^{-V(x)}dx\bigg)^2\leq n.
$$ as claimed.
\end{proof}
Inequality~\eqref{eq:varianceofinformation} was obtained in~\cite{BobMad} by Bobkov and Madiman with an universal constant $C\neq 1$ multiplying the $n$. Our version is sharp, as one can verify that there is equality in~\eqref{eq:varianceofinformation} for the exponential distribution $e^{-\sum |x_i|}/2^n$. Furthermore, note that when $d\mu=e^{-V(x)}dx$ is an isotropic log-concave probability measure on $\R^n$, that is $\int xe^{-V(x)}dx=0$ and $\int x\otimes x e^{-V(x)}dx= I_n$ then one has, by using the Cauchy-Schwartz inequality,
\begin{align*}
n=\int \langle x,\nabla V(x)\rangle e^{-V(x)}dx& \leq \bigg(\int |x|^2e^{-V(x)}dx\bigg)^{\frac{1}{2}}\bigg(\int |\nabla V(x)|^2e^{-V(x)}dx\bigg)^{\frac{1}{2}}\\
&=\sqrt{n}\bigg(\int |\nabla V(x)|^2e^{-V(x)}dx\bigg)^{\frac{1}{2}},
\end{align*}
and so the inequality~\eqref{eq:varianceofinformation} rewrites in this case as
$$\va_{\mu}(V) \leq \int |\nabla V(x)|^2e^{-V(x)}dx.$$

We now mention similar consequences in the Case 2, that can be derived from Theorem~\ref{maintheorem2}. Recall that in this situtation $\varphi$ is a positive concave function on a \emph{bounded}, open , convex subset $\Omega\subset\R^n$ and introduce the probability measure supported on $\Omega$
$$d\nu_\beta(x) = \frac{\varphi(x)^{\beta}\ind_{\Omega}(x)}{\int_\Omega \varphi^\beta}\, dx$$
Note that
$\int_\Omega \varphi(x)^{\alpha}dx < \infty, $
for all $\alpha > -1$, and so $\beta$ is a priori allowed to range in $(-1, +\infty)$. Let us denote
\begin{equation}\label{eq:defRbar}
\overline{R}(f)=\int_{\Omega}f^2\, d\nu_\beta -\big(1 + \frac{(1 - r)^2}{(\beta +2r -1)B(n,\beta,r)}\big)\bigg(\int_{\Omega} f \, d\nu_\beta\bigg)^2, 
\end{equation}
As above, when $\varphi$ is smooth, we rewrite~\eqref{eq:concavemainresult} as $(\beta +2r -1)\overline{R}(f) \le \int_\Omega \frac{\langle (-D^2\varphi)^{-1}\nabla g,\nabla g\rangle}{\varphi}\,\varphi^{2r} d\nu_\beta$ for any smooth $f$ and $g=\varphi^{1-r} f$. In particular, if we take $f=\varphi^{r-1}, g=1$, we get $(\beta +2r -1)\overline{R}(\varphi^{r-1}) \le 0$. This of course extends to general $\varphi$ and we get:
\begin{proposition} 
Let $\Omega$ be a convex body of $\R^n$ and $\varphi$ be a positive convex function on $\Omega$.
If $\beta > \max\{-r + (-n +\sqrt{n^2+4(r^2-r)n})/2, 1-2r\}$, then 
\begin{equation}\label{eq:concavecase}
\int_{\Omega}\varphi^{\beta}dx\int_{\Omega}\varphi^{\beta+ 2r -2}dx\leq  \big(1 + \frac{(1 - r)^2}{(\beta +2r -1)B(n,\beta,r)}\big)\bigg{(}\int_\Omega \varphi^{\beta+ r-1}dx\bigg{)}^2.
\end{equation} 
In particular (case $r=0)$, setting
$$
\overline\Psi(\beta) := \ln\bigg(\prod_{i=1}^n(\beta + i)\int_\Omega\varphi(x)^{\beta}dx\bigg)
$$
we have
\begin{equation}\label{eq:psibar}
\overline\Psi(\beta)+\overline\Psi(\beta+2)\leq 2\overline\Psi(\beta +1), \quad\forall\, \beta > -1.
\end{equation}

\end{proposition}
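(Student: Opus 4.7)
The plan is to argue in complete parallel with Proposition~\ref{Phiconcave}, substituting Theorem~\ref{maintheorem2} for Theorem~\ref{maintheorem1}. (The statement should read ``positive \emph{concave} function'' in order for Theorem~\ref{maintheorem2} to apply --- this is the Case 2 framework.) By a standard smoothing and strict-concavity approximation, I first reduce to the case when $\varphi \in C^\infty(\overline{\Omega})$ is strictly concave. The key move is then to feed the test function $f = \varphi^{r-1}$ into~\eqref{eq:concavemainresult}; then $g = f\varphi^{1-r} \equiv 1$, so $\nabla g \equiv 0$ and the Hessian term on the right-hand side vanishes. The assumption $\beta > 1 - 2r$ guarantees $\beta + 2r - 1 > 0$, so the resulting estimate may be written as $\overline{R}(\varphi^{r-1}) \le 0$ in the notation of~\eqref{eq:defRbar}.

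Next I would translate this variance inequality into~\eqref{eq:concavecase}. Since $d\nu_\beta(x) = \varphi(x)^\beta\,dx/\int_\Omega\varphi^\beta\,dx$, one has
$$\int \varphi^{2r-2}\,d\nu_\beta = \frac{\int_\Omega \varphi^{\beta+2r-2}\,dx}{\int_\Omega\varphi^\beta\,dx}, \qquad \int \varphi^{r-1}\,d\nu_\beta = \frac{\int_\Omega\varphi^{\beta+r-1}\,dx}{\int_\Omega\varphi^\beta\,dx},$$
and multiplying $\overline{R}(\varphi^{r-1}) \le 0$ through by the common denominator $\bigl(\int_\Omega \varphi^\beta\,dx\bigr)^2$ yields exactly~\eqref{eq:concavecase}.

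For the $\overline\Psi$ claim, I specialize to $r=0$, where $B(n,\beta,0) = (\beta+n)/n$ and the constant $1 + (1-r)^2/[(\beta+2r-1)B(n,\beta,r)]$ simplifies to $\beta(\beta+n-1)/[(\beta-1)(\beta+n)]$. Writing $u(\alpha) := \int_\Omega \varphi^\alpha\,dx$ and relabelling $\beta \mapsto \beta + 2$ (so the proposition's range $\beta > 1$ becomes $\beta > -1$, matching the hypothesis of~\eqref{eq:psibar}), the inequality reads
$$u(\beta)\, u(\beta+2) \le \frac{(\beta+2)(\beta+n+1)}{(\beta+1)(\beta+n+2)}\, u(\beta+1)^2.$$
A short telescoping computation gives
$$\prod_{i=1}^n\frac{(\beta+i)(\beta+2+i)}{(\beta+1+i)^2} = \frac{(\beta+1)(\beta+n+2)}{(\beta+2)(\beta+n+1)},$$
so multiplying the previous inequality by this product and taking the logarithm produces exactly the two-point concavity $\overline\Psi(\beta)+\overline\Psi(\beta+2) \le 2\overline\Psi(\beta+1)$.

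I do not anticipate a substantive obstacle: the result is essentially an algebraic rearrangement of Theorem~\ref{maintheorem2} specialized to a constant $g$, combined with the elementary product identity above. The only points requiring care are the sign of $\beta + 2r - 1$ (which forces the extra threshold $\beta > 1 - 2r$, absent from Theorem~\ref{maintheorem2} itself) and the approximation step justifying smoothness and strict concavity of $\varphi$; both are standard.
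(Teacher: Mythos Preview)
Your proposal is correct and follows exactly the approach sketched in the paper: set $f=\varphi^{r-1}$ so that $g\equiv 1$ in Theorem~\ref{maintheorem2}, obtain $(\beta+2r-1)\overline{R}(\varphi^{r-1})\le 0$, and unwind the definitions. Your additional remarks---the typo (``convex'' should read ``concave''), the reason for the extra threshold $\beta>1-2r$, and the explicit telescoping computation for the $r=0$ case---are all accurate and simply make explicit what the paper leaves to the reader by analogy with Proposition~\ref{Phiconcave}.
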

Inequality~\eqref{eq:psibar} is a special case of a result of Borell~\cite{Bor3} who proved that $\overline\Psi$ is concave on $(0,+\infty)$. As before, one can show that inequality~\eqref{eq:concavecase} implies, by letting $r\to 1$, the weaker result that the function $\overline{\Phi}$ defined on $(-1,\infty)$  by
$$ \overline{\Phi}(\beta) =\ln\bigg((\beta +1)\int_\Omega\varphi^\beta dx\bigg) -\frac{n-1}{n}\ln\bigg(\frac{(\beta+1)^{\beta+1}}{(\beta +n+1)^{\beta+n+1}}\bigg)$$
is concave. Note however, that in dimension $n=1$ this reproduces and extends the result of Borell, since it gives the concavity of $\Psi$ in the larger range $(-1, +\infty)$.
 Let us mention that the concavity of $\overline{\Phi}$ in the form $\overline\Phi'' \le 0$ can be used to reproduce the inequality~\eqref{eq:varianceofinformation} as well.


\section{Some weighted Brascamp-Lieb inequalities and applications}

The following Brascamp-Lieb-type inequality can be derived from the Theorem \ref{maintheorem1},
\begin{theorem}\label{BLtypeineq}
Let $\varphi$ be a $C^2$, positive, convex function defined on an (open) convex subset $\Omega\subseteq \R^n$. For any $\beta > n$, we denote $\mu_\beta$ the probability measure on $\Omega$ given by 
$ d\mu_\beta(x) = \frac{\varphi(x)^{-\beta}}{\int_\Omega\varphi^{-\beta}}dx.$ 
Then, when $\beta\geq n+1$, we have that for any locally Lipschitz function $f\in L^2(\mu_\beta)$, 
\begin{equation}\label{eq:BLtypeineq}
\va_{\mu_\beta}(f)\leq \frac{1}{\beta -1}\int_\Omega\langle (D^2\varphi)^{-1}\nabla f,\nabla f\rangle\, \varphi\, d\mu_\beta.
\end{equation}
\end{theorem}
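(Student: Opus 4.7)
The plan is to obtain Theorem~\ref{BLtypeineq} as the special case $r = 1$ of Theorem~\ref{maintheorem1}. The reason for choosing $r = 1$ is that the additive residual term in~\eqref{eq:convexmainresult} carries the prefactor $(1-r)^2 = 0$ and simply disappears, leaving a clean weighted Brascamp--Lieb inequality with no correction involving $\int f\,d\mu_\beta$. This is explicitly flagged in the introduction as one of the motivating special cases of the main theorems.

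The first step is bookkeeping: setting $r = 1$ gives $g = f\varphi^{1-r} = f$, so $\nabla g = \nabla f$; the integrand $\langle(D^2\varphi)^{-1}\nabla g,\nabla g\rangle\,\varphi^{2r}/\varphi$ collapses to $\langle(D^2\varphi)^{-1}\nabla f,\nabla f\rangle\,\varphi$; and the prefactor $\beta - 2r + 1$ becomes $\beta - 1$. The second step is to verify the admissibility conditions of Theorem~\ref{maintheorem1}: at $r = 1$ the threshold $r + (n + \sqrt{n^2 + 4(r^2-r)n})/2$ simplifies to $n+1$, and a short factoring shows $A(n,\beta,1) = (\beta-1)(\beta-n-1)/(n(\beta-2))$, which is positive precisely when $\beta > n+1$. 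Hence Theorem~\ref{maintheorem1} applies verbatim and yields~\eqref{eq:BLtypeineq} for all $\beta > n+1$.

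The third step handles the borderline value $\beta = n+1$ by a limiting argument, perturbing the parameter $r$ rather than $\beta$. Fix $\beta = n+1$ and apply Theorem~\ref{maintheorem1} with $r = 1-\epsilon$ for small $\epsilon > 0$; an expansion of the admissibility threshold shows it reduces to $n+1 - 2\epsilon + O(\epsilon^2)$, so the hypothesis holds for every sufficiently small $\epsilon > 0$. One then lets $\epsilon \to 0$: the coefficient $\beta - 2r + 1 = n + 2\epsilon$ tends to $n$, and the integrand $\langle (D^2\varphi)^{-1}\nabla(f\varphi^\epsilon),\nabla(f\varphi^\epsilon)\rangle\varphi^{2-2\epsilon}/\varphi$ converges to $\langle (D^2\varphi)^{-1}\nabla f,\nabla f\rangle\,\varphi$ by dominated convergence (first for $f$ smooth with compact support in $\Omega$, then extended to locally Lipschitz $f\in L^2(\mu_\beta)$ by the usual approximation).

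The main point to watch, and really the only non-routine one, is controlling the residual term through this limit: the constant $A(n,n+1,1-\epsilon)$ vanishes as $\epsilon \to 0$, and one must check this vanishing is only \emph{linear} in $\epsilon$ so that it is dominated by the $\epsilon^2$ coming from $(1-r)^2$. A direct asymptotic computation gives $A(n,n+1,1-\epsilon) \sim 2\epsilon/(n-1)$, whence $(1-r)^2/A(n,n+1,1-\epsilon) = O(\epsilon) \to 0$. Therefore the additive term drops out in the limit and~\eqref{eq:BLtypeineq} extends to the boundary case $\beta = n+1$, completing the proof.
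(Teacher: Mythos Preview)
Your argument is correct and, for the main range $\beta>n+1$, identical to the paper's: set $r=1$ in Theorem~\ref{maintheorem1} (the paper also explicitly rescales to $\tilde\varphi=c\varphi$ with $c^\beta=\int_\Omega\varphi^{-\beta}$ so that the normalization hypothesis of Theorem~\ref{maintheorem1} is met, but since~\eqref{eq:convexmainresult} is invariant under $\varphi\mapsto c\varphi$ this is cosmetic).

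The only difference is at the endpoint $\beta=n+1$. The paper handles it by letting $\beta\downarrow n+1$ with $r=1$ fixed, which is shorter. You instead keep $\beta=n+1$ fixed and send $r=1-\epsilon\uparrow 1$, which requires the extra asymptotic check $A(n,n+1,1-\epsilon)\sim 2\epsilon/(n-1)$ (for $n\ge 2$; when $n=1$ one has $A(1,\beta,r)=\beta-1$ and there is nothing to do). Your route is slightly more laborious but has the minor advantage that the underlying measure $\mu_\beta$ does not move during the limit, so no care is needed about integrability of $\varphi^{-\beta'}$ for nearby $\beta'$.
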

\begin{proof}
For the case $\beta > n+1$, our result is proved by using the Theorem \ref{maintheorem1} with $r =1$ and function $\widetilde{\varphi} = c\varphi$ with $c^\beta =\int_\Omega\varphi^{-\beta}dx$.

The case $\beta = n+1$ is proved by letting $\beta$ decrease to $n+1$.
\end{proof}
Furthermore, one can derive from~\eqref{eq:BLtypeineq} applied to $\beta +1$, after proper normalization, the following reverse-weighted  inequality: for any  locally Lipschitz function $f$ on $\Omega$,
 \begin{equation}\label{eq:reverseBLtypineq}
\inf\limits_{c\in \R}\int \frac{|f(x)-c|^2}{\varphi(x)}\, d\mu_{\beta}(x)
\leq \frac{1}{\beta}\int \langle (D^2\varphi)^{-1}\nabla f,\nabla f\rangle\, d\mu_{\beta},\quad\, \forall\, \beta\geq n.
\end{equation}

Similarly, by applying the Theorem \ref{maintheorem2} to $r =1$, one gets
\begin{theorem}\label{BLtypeineq1}
Let $\varphi$ be a positive concave function on a compact, convex set $\Omega\subset\R^n$. For $\beta > -1$, denote $\nu_\beta $ the probability measure on $\Omega$ defined by
$ d\nu_\beta(x) = \frac{\varphi(x)^{\beta}}{\int_\Omega\varphi^\beta}dx. $
Then for any locally Lipschitz function $f\in L^2(\nu_\beta)$, we have 
\begin{equation}\label{eq:BLtypeineq1}
\va_{\nu_\beta}(f)\leq \frac{1}{\beta + 1}\int_\Omega\langle(-D^2\varphi)^{-1}\nabla f,\nabla f\rangle\, \varphi\, d\nu_{\beta}.
\end{equation}
\end{theorem}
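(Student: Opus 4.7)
The plan is to follow exactly the strategy used for Theorem \ref{BLtypeineq}, applying the companion result Theorem \ref{maintheorem2} with the special parameter choice $r = 1$, up to a harmless normalization of $\varphi$.

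First, I would reduce to the case where $\varphi^\beta$ itself is a probability density. Given $\varphi$ positive concave on $\Omega$ as in the statement, set $c = \bigl(\int_\Omega \varphi^\beta\,dx\bigr)^{1/\beta}$ (well-defined since $\beta > -1$ and $\varphi$ is bounded on the compact set $\Omega$, so the integral is finite and strictly positive) and put $\widetilde\varphi = \varphi / c$. Then $\widetilde\varphi$ is still $C^2$, positive, concave on $\Omega$, and $\int_\Omega \widetilde\varphi^\beta\,dx = 1$, so $d\nu_\beta = \widetilde\varphi^\beta\,dx$ falls under the hypothesis of Theorem \ref{maintheorem2}. The scaling cleanly propagates through the conclusion because $(-D^2\widetilde\varphi)^{-1} = c(-D^2\varphi)^{-1}$ while $\widetilde\varphi = \varphi/c$, so the product $\langle(-D^2\widetilde\varphi)^{-1}\nabla f,\nabla f\rangle\,\widetilde\varphi$ equals $\langle(-D^2\varphi)^{-1}\nabla f,\nabla f\rangle\,\varphi$.

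Next, I would specialize Theorem \ref{maintheorem2} at $r = 1$. The admissibility condition becomes
\[
\beta \;>\; -1 + \frac{-n + \sqrt{n^2 + 4(1-1)n}}{2} \;=\; -1,
\]
which is precisely the range assumed here. With $r = 1$ the auxiliary function is $g = f\varphi^{1-r} = f$, the coefficient on the left is $\beta + 2r - 1 = \beta + 1$, and, decisively, the prefactor $(1-r)^2$ in front of the mean-square term on the right vanishes. (One should also verify that $B(n,\beta,1) = \frac{\beta+n}{n} - \frac{n-1}{n(\beta+2)} > 0$ for $\beta > -1$, which is immediate by a short algebraic check, so that Theorem \ref{maintheorem2} genuinely applies.) Substituting these values into \eqref{eq:concavemainresult} written for $\widetilde\varphi$ gives
\[
(\beta+1)\,\va_{\nu_\beta}(f) \;\le\; \int_\Omega \frac{\langle (-D^2\widetilde\varphi)^{-1}\nabla f,\nabla f\rangle}{\widetilde\varphi}\,\widetilde\varphi^{2}\,d\nu_\beta,
\]
which, after the scaling identity above, is exactly the claimed inequality \eqref{eq:BLtypeineq1} divided by $\beta+1$.

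There is essentially no obstacle beyond this bookkeeping: the main theorem has already been proved, and the choice $r=1$ is tailored so that the Poincaré-type error term disappears, leaving a pure weighted Brascamp-Lieb inequality. The only mild points worth noting are (a) checking that the normalization constant $c$ is well defined and positive (handled by compactness of $\Omega$ and $\beta > -1$), and (b) extending from $C^2$ concave $\varphi$ to the generality stated, which follows by the standard approximation already implicit in Theorem \ref{maintheorem2}. Unlike the convex case \ref{BLtypeineq}, no endpoint limiting argument in $\beta$ is needed, since the admissible range $\beta > -1$ coincides with the assumption of the theorem.
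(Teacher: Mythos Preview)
Your proof is correct and matches the paper's approach exactly: the paper simply states that Theorem~\ref{BLtypeineq1} follows ``similarly, by applying Theorem~\ref{maintheorem2} to $r=1$,'' and you have filled in precisely those details, including the normalization and the observation that no endpoint limit in $\beta$ is needed here. One small caveat: your normalization $c=\bigl(\int_\Omega\varphi^\beta\bigr)^{1/\beta}$ is undefined at $\beta=0$, but this is harmless since both sides of \eqref{eq:BLtypeineq1} are invariant under replacing $\varphi$ by a positive scalar multiple, so the case $\beta=0$ follows either by continuity in $\beta$ or by rescaling $\Omega$ to have unit volume.
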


Moreover, for any bounded, smooth function $f$ on $\Omega$ and $\beta > 0$, the following reversed-weighted form of~\eqref{eq:BLtypeineq1} holds
\begin{equation}\label{eq:reverseBLtypineq1}
\inf_{c\in \R}\int_{\Omega}\frac{|f(x)-c|^2}{\varphi}\, d\nu_\beta\leq \frac{1}{\beta}\int_\Omega\langle( -D^2\varphi)^{-1}\nabla f,\nabla f\rangle\, d\nu_{\beta},
\end{equation}

Inequalities~\eqref{eq:BLtypeineq} and~\eqref{eq:BLtypeineq1} allow to simplify some arguments given by Bobkov and Ledoux~\cite{BobLed} on how to recove the~Brascamp-Lieb inequality~\eqref{eq:BLinequality}. 


Let $d\mu = e^{-V(x)}dx$ be a log-concave probability measure on $\R^n$. We can assume that $V$ is bounded from below.  Hence $1+ V/\beta$ is a positive, convex function for $\beta $ large enough.  Denoting 
$$ c_{\beta} = \bigg(\int (1 + \frac{V(x)}{\beta})^{-\beta}dx\bigg)^{\frac{1}{\beta}},$$
and applying  inequality~\eqref{eq:BLtypeineq} to the convex function $\varphi(x) = c_{\beta}(1 + V(x)/\beta)$ with $\beta$ large enough, one obtains
$$ \int f^2\bigl(1 + \frac{V}{\beta}\bigl)^{-\beta}\frac{dx}{c_{\beta}^{\beta}} -\bigg(\int f\bigl(1 + \frac{V}{\beta}\bigl)^{-\beta}\frac{dx}{c_{\beta}^{\beta}}\bigg)^2\leq \frac{\beta}{\beta - 1}\int \langle (D^2V)^{-1}\nabla f,\nabla f\rangle (1+\frac{V}{\beta})^{-\beta+1} \frac{dx}{c_{\beta}^{\beta}},$$
for any bounded, smooth function $f$ on $\R^n$. Letting $\beta$ tend to infinity, one obtains~\eqref{eq:BLinequality} (since $\lim\limits_{\beta\to\infty}c_{\beta}^{\beta} = 1$).

We can use~\eqref{eq:BLtypeineq1} as well. For this,  consider the concave function $1 - V/\beta$ on the domain $\Omega_\beta =\{x\ | \ V(x)\leq \beta\}$. If we let  $\beta\to\infty$ then we get again the inequality~\eqref{eq:BLinequality}.

There is another way of obtaining~\eqref{eq:BLinequality} from~\eqref{eq:BLtypeineq}  which is of independent interest and goes through an slight improvement of an inequality of Bobkov and Ledoux.
\begin{proposition}
Let $\mu$ be a log-concave probability measure on $\R^n$ with density $e^{-V}$ where $V$ is a smooth convex function, and let $\beta\ge n+1$.  For $x\in \R^n$, write $W_x$ for the nonnegative self-adjoint operator
$$ W_x := D^2 V(x) + \frac1\beta \nabla V(x) \otimes \nabla V(x),$$
where $a\otimes a$ denotes the linear map $x\to \langle x, a\rangle a$. Then, for any locally Lipschitz function $f$ we have 
\begin{equation}\label{eq:application}
\va_{\mu}(f)\leq \frac{\beta}{\beta-1}  \int \langle W^{-1} \nabla f , \nabla f\rangle \, d\mu.
\end{equation}
\end{proposition}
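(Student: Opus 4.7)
The plan is to apply Theorem~\ref{BLtypeineq}, in the form of the weighted Brascamp--Lieb inequality~\eqref{eq:BLtypeineq}, directly to the convex function
$$\varphi(x) := e^{V(x)/\beta}$$
on $\Omega = \R^n$, with the same parameter $\beta \geq n+1$. The motivation is that this very choice of $\varphi$ encodes $W$ exactly through $D^2\varphi$, so the desired inequality should drop out after a one-line computation.

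First I would check that the hypotheses of Theorem~\ref{BLtypeineq} are met. The function $\varphi$ is positive and convex, since $V$ is convex and $t\mapsto e^{t/\beta}$ is increasing and convex; also
$$\int_{\R^n} \varphi^{-\beta}\, dx = \int_{\R^n} e^{-V}\, dx = 1,$$
so the normalized probability measure $\mu_\beta$ in~\eqref{eq:BLtypeineq} coincides with the given $\mu$. Next I would compute
$$\nabla \varphi = \tfrac{1}{\beta} \varphi \nabla V, \qquad D^2 \varphi = \tfrac{\varphi}{\beta}\bigl(D^2 V + \tfrac{1}{\beta}\nabla V \otimes \nabla V\bigr) = \tfrac{\varphi}{\beta}\, W_x,$$
so that $(D^2\varphi)^{-1} = (\beta/\varphi)\, W_x^{-1}$ and hence the key pointwise identity
$$\varphi(x)\,\langle (D^2\varphi)^{-1}\nabla f(x),\nabla f(x)\rangle \;=\; \beta\, \langle W_x^{-1} \nabla f(x), \nabla f(x)\rangle .$$
Substituting this into~\eqref{eq:BLtypeineq} gives
$$\va_\mu(f) \;\leq\; \frac{1}{\beta - 1} \int \beta\, \langle W^{-1}\nabla f, \nabla f\rangle\, d\mu \;=\; \frac{\beta}{\beta-1} \int \langle W^{-1}\nabla f, \nabla f\rangle\, d\mu,$$
which is exactly~\eqref{eq:application}.

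The only subtle point, and the main technical obstacle, is that $W$ must be positive definite in order for $(D^2\varphi)^{-1}$ and $W^{-1}$ to be literally defined. Both $D^2 V$ and $\nabla V \otimes \nabla V$ are positive semi-definite, so $W \geq 0$, but strict positivity may fail. This is handled by the standard regularization $V_\epsilon := V + \tfrac{\epsilon}{2}|x|^2$, which makes $D^2 V_\epsilon \geq \epsilon\, \id$ (after renormalizing the density to a probability measure $\mu^\epsilon$), applying the argument above to $V_\epsilon$, and then letting $\epsilon \to 0^+$ via dominated convergence on both sides; alternatively one interprets $\langle W^{-1}\nabla f, \nabla f\rangle$ as $+\infty$ whenever $\nabla f$ is not in the range of $W$, in which case~\eqref{eq:application} is trivial.
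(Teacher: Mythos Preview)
Your proof is correct and follows exactly the paper's approach: set $\varphi=e^{V/\beta}$, compute $D^2\varphi=\tfrac{\varphi}{\beta}W_x$, and apply inequality~\eqref{eq:BLtypeineq}. You have simply written out the computation in more detail and made the approximation argument for the degenerate case of $W$ explicit, which the paper leaves implicit.
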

\begin{proof}
 For our $\beta > n$, we define a convex function $\varphi = e^{V/\beta}$. Then $D^2\varphi = \bigl(\frac{1}{\beta}D^2V + \frac{1}{\beta^2}\nabla V\otimes \nabla V\bigl)e^{V/\beta}$.
 Applying~\eqref{eq:BLtypeineq} for $\beta\geq n + 1$, one gets the result.
 \end{proof}
This result was first proved by Bobkov and Ledoux~\cite{BobLed} with the (worse) constant $C_\beta := (\sqrt{\beta +1}+1)^2 /\beta$ in place of $\beta/(\beta-1)$ (although on the larger range $\beta>n$). 
Since for the one-rank perturbation of a positive matrix we have 
\begin{equation}\label{eq:reversematrix}
\bigl(A + a\otimes a)^{-1} = A^{-1}-\frac{A^{-1}a\otimes A^{-1}a}{1+\langle A^{-1}a,a\rangle},
\end{equation}
we can rewrite the inequality as
$$
\va_{\mu}(f)\leq \frac{\beta}{\beta-1}\bigg{(}\int\langle(D^2 V)^{-1}\nabla f,\nabla f\rangle e^{-V}dx -  \int \frac{\langle(D^2 V)^{-1}\nabla V,\nabla f\rangle^2}{\beta+ \langle(D^2 V)^{-1}\nabla V,\nabla V\rangle} e^{-V}dx\bigg{)},
$$
for any bounded, smooth function $f$ on $\R^n$. And Brascamp-Lieb inequality~\eqref{eq:BLinequality} is deduced from~\eqref{eq:application} by letting $\beta\to\infty$. 

As in~\cite{BobLed}, an application of~\eqref{eq:application} to Gaussian measures on $\R^n$ gives us the weighted Poincar\'e type inequality for the family of $\chi_n-$distributions on $[0,\infty)$ defined by
$$ d\chi_n(r) =  \frac{2^{1-\frac{n}{2}}}{\Gamma(\frac{n}{2})}r^{n-1}e^{-\frac{r^2}{2}}1_{[0,\infty)}.$$
Indeed, for any bounded, smooth function $g$ on $[0,\infty)$, setting $f(x) = g(|x|)$ and $\beta = n + 1$, inequality~\eqref{eq:application} with $V(x) = (|x|^2 + n\ln 2\pi)/2$ yeilds
$$ \va_{\chi_n}(g)\leq \frac{(n+1)^2}{n}\int_0^{\infty}\frac{(g'(r))^2}{n+1+r^2}\, d\chi_n\leq \int_0^{\infty}(g'(r))^2\frac{n+3}{n+r^2}\, d\chi_n. $$

Another interesting application of inequality~\eqref{eq:application} concerns the probability measures on $\R^n$ having the density $d\mu_{r,n}(x) = c_{r,n}\exp\{-(|x_1|^r + \cdots + |x|^r)/ r\}dx$ with $r\in [1,2]$.
In particular, the result below reproduces the Poincar\'{e} inequality (although with non-optimal numerical constants) for such measures.
\begin{proposition}
Let $ r \in [1,2]$ and $f$ be a smooth, $\mu_{r,n}-$square integrable function on $\R^n$. Then the following inequality holds:
\begin{eqnarray*}
\int f(x)^2\, d\mu_{r,n} - \bigg(\int f(x)\, d\mu_{r,n}\bigg)^2 & \le & 4 \int \sum_{i=1}^n 
\frac{|x_i|^{2-r}}{|x_i|^r + 2(r-1)} \bigg( \frac{\partial f}{\partial x_i}(x)\bigg)^2 \, d\mu_{r,n}(x) \\
&\leq& C_r\int |\nabla f(x)|^2\, d\mu_{r,n}, 
\end{eqnarray*}
 with 
$C_r = \frac{4}{r}(2 - r)^{\frac{2-r}{r}} \in (\frac95, 4]$.
\end{proposition}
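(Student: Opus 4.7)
My plan is to exploit the product structure of $\mu_{r,n}$ and reduce the statement to a one-dimensional inequality. Indeed, since the density of $\mu_{r,n}$ factors coordinate-wise, $\mu_{r,n} = \mu_{r,1}^{\otimes n}$, and the classical tensorization of the variance yields
\begin{equation*}
\va_{\mu_{r,n}}(f) \le \sum_{i=1}^n \int \va_{\mu_{r,1}}^{(i)}(f) \, d\mu_{r,n},
\end{equation*}
where $\va_{\mu_{r,1}}^{(i)}(f)$ denotes the variance of $f$ viewed as a function of $x_i$ alone, with the other coordinates frozen. Applying the one-dimensional analogue of the claim coordinate-wise and integrating reproduces the first stated inequality. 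It therefore suffices to establish, for smooth $g$,
\begin{equation*}
\va_{\mu_{r,1}}(g) \le 4 \int \frac{|x|^{2-r}}{|x|^r + 2(r-1)} \, g'(x)^2 \, d\mu_{r,1}(x).
\end{equation*}

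For this one-dimensional bound I would invoke inequality~\eqref{eq:application} with $V(x) = |x|^r/r$ and the smallest admissible parameter $\beta = n+1 = 2$. Direct differentiation gives $V'(x) = \operatorname{sgn}(x)|x|^{r-1}$ and $V''(x) = (r-1)|x|^{r-2}$, so
\begin{equation*}
W(x) = V''(x) + \tfrac{1}{2} V'(x)^2 = (r-1)|x|^{r-2} + \tfrac{1}{2}|x|^{2(r-1)} = \frac{2(r-1) + |x|^r}{2|x|^{2-r}},
\end{equation*}
and substituting $1/W$ together with the prefactor $\beta/(\beta-1) = 2$ reproduces the weight $4|x|^{2-r}/(|x|^r + 2(r-1))$ on the nose. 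Because $V$ is not $C^2$ at the origin (and only Lipschitz there when $r=1$), inequality~\eqref{eq:application} must first be applied to a smooth strictly convex approximation, such as $V_\delta(x) = (x^2 + \delta)^{r/2}/r$; one then passes to the limit $\delta \downarrow 0$ by dominated convergence on both sides.

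The second inequality is a crude pointwise bound of the weight by its maximum. Setting $h(x) = |x|^{2-r}/(|x|^r + 2(r-1))$ and substituting $u = |x|^r$, elementary one-variable calculus shows that $h$ is maximized at $|x| = (2-r)^{1/r}$ with maximal value $(2-r)^{(2-r)/r}/r$, whence $C_r = 4(2-r)^{(2-r)/r}/r$. The endpoint values $C_1 = 4$ and $C_2 = 2$, together with a derivative analysis of $\log C_r$ (whose unique interior critical point on $[1,2]$ solves $2 - t + \log t = 0$ with $t = 2-r$, located near $r \approx 1.85$), determine the stated range $(\tfrac{9}{5}, 4]$. The only real obstacle is the regularity approximation in the previous paragraph: one must verify that both sides of~\eqref{eq:application} applied to $V_\delta$ converge as $\delta \downarrow 0$. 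This is routine since the limiting weight is uniformly bounded on $\R$, the measure $\mu_{r,1}$ has exponentially decaying tails, and $V_\delta''$ is uniformly positive on compact sets, so dominated convergence controls every term.
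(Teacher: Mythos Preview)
Your proof is correct and follows the same route as the paper: tensorize to reduce to $n=1$, apply inequality~\eqref{eq:application} with $\beta=2$ to $V(x)=|x|^r/r-\ln c_{r,1}$, and then bound the resulting weight by its supremum $(2-r)^{(2-r)/r}/r$. You are in fact more careful than the paper about the lack of smoothness of $V$ at the origin; the paper simply applies~\eqref{eq:application} directly for $r>1$ and recovers $r=1$ by letting $r\downarrow 1$.
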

\begin{proof}
Since $\mu_{r,n} = \mu_{r,1}\otimes\cdots\otimes\mu_{r,1}$ it is enough, by elementary tenzoration, to prove the inequality in dimension $n=1$. 
We consider the case $r > 1$; the case $r = 1$ follows by limit. Applying the inequality~\eqref{eq:application} for $\beta = 2$ and  to the convex function $V(x) = |x|^r/r - \ln c_{r,1}$ on $\R$, one has
$$ \int_{\R}f(x)^2\, d\mu_{r,1} - \bigg(\int_\R f(x)\, d\mu_{r,1}\bigg)^2 \leq 4\int_\R \frac{|x|^{2-r}}{2(r - 1) + |x|^r}\bigl(f'(x)\bigl)^2\, d\mu_{r,1}. $$
Since $1< r\leq 2$ then the function $g(t) = t^{2-r}/(2(r - 1) + t^r)$ is bounded on $[0,\infty)$, with $g(t)\leq (2 - r)^{\frac{2-r}{r}}/r$ for all $t\geq 0$. 
\end{proof}


\section{Weighted Poincar\'{e} inequality for uniformly convex potentials with application to the Cauchy measures}

This section discusses weighted Poincar\'{e} inequality for some special probability measures which are the Cauchy measures $\tau_{\beta}$ defined by~\eqref{eq:Cauchydensity} and the measures $\tau_{\sigma,\beta}$ defined by~\eqref{eq:Cauchytypedensity}. 

We observe first that if $\varphi $ is uniformly convex on $\Omega$, that is $D^2\varphi(x) \geq C I_n$ for all $x\in \Omega$ and for some $C > 0$, where $I_n$ denotes $n\times n$ identity matrix,  then we get from the Theorem~\ref{BLtypeineq} 
the following weighted Poincar\'{e}-type inequality and its reverse-weighted form. 

\begin{theorem}\label{weightedPoincare}
Let $\varphi$ be a positive, strictly convex function on an open convex set  $\Omega\subseteq \R^n$, such that $D^2\varphi\geq CI_n$, for some $C > 0$. Introduce the probability measure on $\Omega$ given by
$ d\mu_\beta = \frac{\varphi(x)^{-\beta}}{\int_\Omega\varphi^{-\beta}}dx $.
 Then, when $\beta \ge n+1$, we have,  for any locally Lipschitz function $f\in L^2(\mu_\beta)$, that
\begin{equation}\label{eq:weightedPoincare}
\va_{\mu_{\beta}}(f)\leq \frac{1}{C(\beta - 1)}\int |\nabla f(x)|^2\,\varphi(x)\, d\mu_{\beta}.
\end{equation}
Moreover, for any $\beta\geq n$ and for any smooth, bounded function $f$ on $\Omega$, it holds
\begin{equation}\label{eq:reverseweighted}
\inf\limits_{c\in \R}\int \frac{|f(x)-c|^2}{\varphi(x)}\, d\mu_{\beta}(x)\leq \frac{1}{C\beta}\int |\nabla f|^2\, d\mu_{\beta}.
\end{equation}
\end{theorem}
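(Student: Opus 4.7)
The plan is to reduce both statements to the Brascamp--Lieb-type inequality \eqref{eq:BLtypeineq} of Theorem \ref{BLtypeineq} and its reverse-weighted form \eqref{eq:reverseBLtypineq}, using only the pointwise spectral bound coming from uniform convexity of $\varphi$. The key linear-algebra fact is the following: if $A$ is a symmetric positive-definite $n\times n$ matrix with $A\geq C\, I_n$ (as quadratic forms), then $A^{-1}\leq C^{-1}\, I_n$, i.e.
$$ \langle A^{-1} v, v\rangle \leq \frac{|v|^2}{C}, \qquad \forall\, v\in \R^n. $$
Under the hypothesis $D^2\varphi(x)\geq C\, I_n$ on $\Omega$, this bound applies pointwise to $A = D^2\varphi(x)$ and, in particular, to $v = \nabla f(x)$ for a locally Lipschitz $f$.

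For inequality \eqref{eq:weightedPoincare}, I would start from \eqref{eq:BLtypeineq} (valid for $\beta\geq n+1$), which reads
$$ \va_{\mu_\beta}(f)\leq \frac{1}{\beta-1}\int_\Omega \langle (D^2\varphi)^{-1}\nabla f, \nabla f\rangle\, \varphi\, d\mu_\beta, $$
and apply the pointwise estimate above to the integrand. Since $\varphi\geq 0$, one may bound the integrand pointwise by $|\nabla f|^2\varphi/C$, which gives the weighted Poincar\'e inequality \eqref{eq:weightedPoincare} upon integration.

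For inequality \eqref{eq:reverseweighted}, the argument is identical except that I would start from \eqref{eq:reverseBLtypineq}, which is valid for all $\beta\geq n$ (one fewer than the range needed for \eqref{eq:BLtypeineq}); this explains the slightly larger range of validity for the reverse-weighted form. The same pointwise bound $\langle (D^2\varphi)^{-1}\nabla f,\nabla f\rangle\leq |\nabla f|^2/C$ applied inside the right-hand side of \eqref{eq:reverseBLtypineq} immediately yields \eqref{eq:reverseweighted}.

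There is no substantial obstacle: the theorem is really a corollary of the previously established weighted Brascamp--Lieb inequalities, with uniform convexity serving only to replace the anisotropic weight $\langle(D^2\varphi)^{-1}\cdot,\cdot\rangle$ by the Euclidean one $|\cdot|^2/C$. The only minor points to check are that the approximation arguments (passing from smooth $f$ to locally Lipschitz $f\in L^2(\mu_\beta)$) carry over, which is standard and already built into Theorem \ref{BLtypeineq}, and that the inequality $(D^2\varphi)^{-1}\leq C^{-1} I_n$ holds in the operator sense at every $x\in\Omega$, which follows at once from the hypothesis $D^2\varphi(x)\geq C I_n$ since both sides are positive definite.
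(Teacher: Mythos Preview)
Your proposal is correct and follows exactly the paper's approach: the theorem is stated as an immediate consequence of Theorem~\ref{BLtypeineq} (for \eqref{eq:weightedPoincare}) and of its reverse-weighted form \eqref{eq:reverseBLtypineq} (for \eqref{eq:reverseweighted}), using only the pointwise bound $(D^2\varphi)^{-1}\le C^{-1}I_n$ that comes from $D^2\varphi\ge C\,I_n$. Your identification of the different ranges of $\beta$ for the two inequalities matches the paper as well.
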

It is well-known that the Poincar\'{e} inequality is equivalent to the exponential convergence of the semi-group with the generator associated to the Dirichlet form. Inequality~\eqref{eq:weightedPoincare} possesses a similar property, more precisely:
\begin{proposition}\label{spectral-gap}
Let $\varphi$ be a convex function satisfying the conditions of Theorem \ref{weightedPoincare} and  $\beta\geq n+1$. Denote 
 $P_t = e^{tL_\beta}$ is the semigroup associated to the differential operator $L_\beta := \varphi\Delta -(\beta - 1)\langle\nabla\varphi,\nabla \cdot\, \rangle$ on $L^2(\mu_\beta)$.Then  inequality~\eqref{eq:weightedPoincare} is equivalent to
\begin{equation}\label{eq:spectral-gap}
\va_{\mu_\beta}(P_tf)\leq e^{-2C(\beta -1)t}\va_{\mu_\beta}(f), 
\end{equation}
for any $f\in L^2(\mu_\beta)$.
\end{proposition}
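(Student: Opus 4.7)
The plan is to use the standard equivalence between a Poincar\'e inequality and the exponential decay of variance along a reversible semigroup, once we identify the correct Dirichlet form associated to $L_\beta$ on $L^2(\mu_\beta)$.

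First I would show by integration by parts that $L_\beta$ is symmetric with respect to $\mu_\beta$, with Dirichlet form $\mathcal{E}(f,g) := \int \langle \nabla f, \nabla g\rangle\,\varphi\, d\mu_\beta$. Concretely, writing $d\mu_\beta = \varphi^{-\beta}/Z\,dx$, integration by parts gives
$$\int g\, L_\beta f\, d\mu_\beta = \frac{1}{Z}\int g\,\Delta f\, \varphi^{1-\beta}\,dx - \frac{\beta-1}{Z}\int g\,\langle \nabla\varphi,\nabla f\rangle \varphi^{-\beta}\,dx = -\int \langle \nabla f,\nabla g\rangle\,\varphi\, d\mu_\beta,$$
so $\mathcal{E}(f,g) = -\int g\, L_\beta f\, d\mu_\beta$. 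In particular $\int L_\beta f\, d\mu_\beta = 0$ (take $g=1$), so $P_t$ preserves the mean and hence $\va_{\mu_\beta}(P_tf) = \int (P_tf)^2\, d\mu_\beta - (\int f\, d\mu_\beta)^2$. The weighted Poincar\'e inequality~\eqref{eq:weightedPoincare} rewrites as $\va_{\mu_\beta}(f)\le \frac{1}{C(\beta-1)}\mathcal{E}(f,f)$.

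For the implication \eqref{eq:weightedPoincare}~$\Rightarrow$~\eqref{eq:spectral-gap}, I would differentiate $H(t):=\va_{\mu_\beta}(P_tf)$. Using symmetry and the mean-preservation,
$$H'(t) = 2\int P_tf\cdot L_\beta P_tf\, d\mu_\beta = -2\,\mathcal{E}(P_tf, P_tf) \leq -2C(\beta-1)\,\va_{\mu_\beta}(P_tf) = -2C(\beta-1)H(t),$$
and Gr\"onwall's lemma yields~\eqref{eq:spectral-gap}. Conversely, \eqref{eq:spectral-gap}~$\Rightarrow$~\eqref{eq:weightedPoincare} is obtained by differentiating at $t=0$: since $H(t)\le e^{-2C(\beta-1)t}H(0)$ with equality at $t=0$, one has $H'(0)\le -2C(\beta-1)H(0)$, i.e.\ $-2\mathcal{E}(f,f)\le -2C(\beta-1)\va_{\mu_\beta}(f)$, which is~\eqref{eq:weightedPoincare}. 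It suffices to check this on a dense class of test functions (say smooth compactly supported, or $C^2(\overline\Omega)$ with vanishing Neumann data when $\Omega$ is bounded) and extend by density, using that \eqref{eq:weightedPoincare} is already established for all locally Lipschitz $f\in L^2(\mu_\beta)$ by Theorem~\ref{weightedPoincare}.

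The main technical point is justifying the functional calculus for $L_\beta$: one must ensure that $L_\beta$ (restricted to a suitable core of smooth functions) has a self-adjoint Friedrichs extension on $L^2(\mu_\beta)$ generating a Markov semigroup $P_t$, so that $t\mapsto P_tf$ is $C^1$ into $L^2$ for $f$ in the domain and the chain rule $\frac{d}{dt}\|P_tf\|_2^2 = 2\langle P_tf, L_\beta P_tf\rangle$ is valid. Under the uniform convexity assumption $D^2\varphi\ge CI_n$ and the smoothness of $\varphi$ on $\Omega$, this is standard elliptic/semigroup theory; the density $\varphi^{-\beta}$ is smooth and positive on $\Omega$ and the Dirichlet form $\mathcal{E}$ is closable. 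Once this functional-analytic setup is in place, the proof is the textbook spectral-gap argument above.
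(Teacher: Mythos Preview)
Your proposal is correct and follows essentially the same approach as the paper: differentiate the variance along the semigroup and apply Gr\"onwall for one direction, differentiate at $t=0$ for the converse. The paper gives only a ``formal proof'' and defers the functional-analytic justifications to~\cite{Bakry1992}, whereas you spell out the Dirichlet-form identification and the semigroup setup more explicitly, but the underlying argument is identical.
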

\begin{proof}
We will give a formal proof of this proposition (see \cite{Bakry1992} for more precise justification of the computations involving $P_t$, $L_\beta$ and the domain of $L_\beta$). 

Assume that~\eqref{eq:weightedPoincare} holds. Since $P_t1 = 1$ and $\int P_tf d\mu_\beta = \int f\,d\mu_\beta$, then it is sufficient to prove~\eqref{eq:spectral-gap} for $f\in L^2(\mu_\beta)$ and $\int f d\mu_\beta =0$. We define $F(t) = \int (P_tf)^2d\mu_\beta$. Then the derivative of $F$ satisfies $ F'(t) \leq -2C(\beta -1)F(t)$ by using the inequality~\eqref{eq:weightedPoincare}. This inequality proves~\eqref{eq:spectral-gap}.

Conversely, assume that~\eqref{eq:spectral-gap} holds. Since~\eqref{eq:spectral-gap} becomes an equality at $t=0$, differentiating the two sides of~\eqref{eq:spectral-gap} at $t=0$ gives~\eqref{eq:weightedPoincare}. 
\end{proof}

Let us return to the Cauchy measures $\tau_\beta$ defined by~\eqref{eq:Cauchydensity}; for these measures, we have $\varphi(x) = 1 + |x|^2$, hence $D^2\varphi = 2I_n$. For instance, when $ \beta > r + (n + \sqrt{n^2 + 4(r^2 -r)n})/2$, \eqref{eq:convexmainresult} takes the following form: for any locally Lipschitz $f$ and $g=f\varphi^{1-r}$,
$$(\beta - 2r +1)\va_{\tau_\beta}(f)\leq \frac12 \int_\Omega |\nabla g|^2\, \varphi^{2r-1}\, d\tau_\beta + \frac{(1 - r)^2}{A(n,\beta,r)} \bigg(\int_\Omega f\, d\tau_\beta\bigg)^2. $$
Let us consider the particular case ($r=1$) given by the Theorem~\ref{weightedPoincare} and the Proposition~\ref{spectral-gap}:

\begin{corollary}\label{Cauchymeasure}
Let $\beta\geq n+1$. For any locally Lipschitz $f\in L^2(\tau_\beta)$ we have
\begin{equation}\label{eq:Cauchymeasure}
\va_{\tau_{\beta}}(f)\leq \frac{1}{2(\beta-1)}\int |\nabla f(x)|^2(1+|x|^2)\, d\tau_{\beta}.
\end{equation}
Moreover, if $\beta\geq n$ then
$$\inf\limits_{c\in \R}\int \frac{|f(x)-c|^2}{1+|x|^2}\, d\tau_{\beta}\leq \frac{1}{2\beta} \int | \nabla f(x)|^2\, d\tau_{\beta}.$$
Finally, let us denote $L_\beta = (1+|x|^2)\Delta -2(\beta - 1)\langle x,\nabla \rangle$ and $P_t$ be the semigroup associated to $L_\beta$ on $L^2(\tau_\beta)$, then
$$ \va_{\tau_\beta}(P_tf)\leq e^{-4(\beta -1)t}\va_{\tau_\beta}(f) $$
for any $f\in L^2(\tau_\beta)$ and $\beta\geq n+1$.
\end{corollary}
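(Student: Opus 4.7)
The plan is a direct specialization of Theorem \ref{weightedPoincare} and Proposition \ref{spectral-gap} to the explicit convex potential $\varphi(x) = 1+|x|^2$ on $\Omega = \R^n$. This $\varphi$ is smooth, positive and strictly convex, with $D^2 \varphi \equiv 2\, I_n$, so the uniform convexity constant appearing in Theorem \ref{weightedPoincare} is $C = 2$. The associated probability measure $\mu_\beta$ is then, by definition, the Cauchy measure $\tau_\beta$ of \eqref{eq:Cauchydensity}; the normalization $Z_\beta$ is finite as soon as $\beta > n/2$, which is automatically satisfied under the hypotheses $\beta \geq n$ and $\beta \geq n+1$ below.

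For the first claim, I would invoke \eqref{eq:weightedPoincare} with $C = 2$ and $\beta \geq n+1$, which reads
$$\va_{\tau_\beta}(f) \leq \frac{1}{2(\beta - 1)} \int |\nabla f(x)|^2 (1+|x|^2)\, d\tau_\beta,$$
exactly as claimed. For the second claim, I would apply the reverse-weighted inequality \eqref{eq:reverseweighted} with $C = 2$ and $\beta \geq n$, obtaining the stated bound by direct substitution. Both parts are pure transcriptions of the general statements and require no further computation.

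For the semigroup estimate, I would first compute $\nabla \varphi(x) = 2x$, so that the operator $L_\beta = \varphi \Delta - (\beta - 1)\langle \nabla \varphi, \nabla \cdot\rangle$ introduced in Proposition \ref{spectral-gap} specializes to $L_\beta = (1+|x|^2)\Delta - 2(\beta - 1)\langle x, \nabla \cdot\rangle$, which matches the operator in the statement. Proposition \ref{spectral-gap} then yields $\va_{\tau_\beta}(P_t f) \leq e^{-2 C (\beta - 1) t} \va_{\tau_\beta}(f)$ with $C = 2$, i.e.\ precisely the rate $4(\beta - 1)$ in the exponent. There is essentially no obstacle: the entire corollary is a transcription of the earlier general results once one observes that $\varphi = 1+|x|^2$ is $2$-uniformly convex, so the only care needed is to record the value $C = 2$ throughout.
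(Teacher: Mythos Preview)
Your proposal is correct and follows exactly the same approach as the paper, which presents the corollary explicitly as the specialization of Theorem~\ref{weightedPoincare} and Proposition~\ref{spectral-gap} to $\varphi(x)=1+|x|^2$ (so $C=2$). There is nothing to add; your transcription of the constants and the identification of $L_\beta$ are all accurate.
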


The weighted Poincar\'{e}-type inequality~\eqref{eq:Cauchymeasure} improves  a result of Bobkov and Ledoux (Theorem {\bf 3.1}, \cite{BobLed}). In that paper the authors obtained a similar result with the constant $C_{\beta}= (\sqrt{1+\frac{2}{\beta-1}}+\sqrt{\frac{2}{\beta+1}})^2$ in the place of $1$ in the right hand side of~\eqref{eq:Cauchymeasure}. A simple calculation with the linear test functions $f(x)=\langle v_0, x\rangle$  shows the constant $1/2(\beta-1)$ in~\eqref{eq:Cauchymeasure} to be sharp. The disadvantage of the Theorem \ref{Cauchymeasure} compared with the result of Bobkov and Ledoux is that the domain of $\beta$ is smaller, that is $\beta \geq n+1$ instead of $\beta\geq n$ as in \cite{BobLed}.

We now consider the case in which $\varphi$ is positive, concave and $\Omega$ is bounded. If $-\varphi$ is strictly convex, it follows from the Proposition \ref{BLtypeineq1} and the same arguments in the Proposition \ref{spectral-gap} that
\begin{theorem}\label{weightedPoincare1}
Let $\varphi$ be a positive, concave function on a bounded convex set $\Omega\subset \R^n$ such that $-D^2\varphi\geq CI_n$ for some $C > 0$. Introduce the probability measure on $\Omega$ given by $ d\nu_\beta = \frac{\varphi^{\beta}(x)\ind_\Omega(x)}{\int_\Omega\varphi^{\beta}}dx$. Then, when $\beta > -1, $ we have  for any locally Lipschitz  $\nu_{\beta}-$square integrable $f$ on $\Omega$, that
\begin{equation}\label{eq:weightedPoincare1}
\va_{\nu_\beta}(f) \leq \frac{1}{C(\beta + 1)}\int_\Omega |\nabla f(x)|^2\varphi(x)\, d\nu_{\beta}.
\end{equation}
And for any bounded, smooth function $f$ on $\Omega$,
\begin{equation}\label{eq:reverseweightedPoincare}
\inf_{c\in \R}\int_{\Omega}\frac{|f(x)-c|^2}{\varphi}\, d\nu_\beta\leq \frac{1}{C\beta}\int_{\Omega}|\nabla f(x)|^2\, d\nu_{\beta},\quad\forall\, \beta > 0.
\end{equation}
Moreover, let us denote $N_\beta = \varphi\Delta + (\beta + 1)\langle\nabla\varphi,\nabla \rangle$ and let $P_t$ be the semigroup associated to $N_\beta$ on $L^2(\nu_\beta)$. Then we have
$$ \va_{\nu_\beta}(P_tf)\leq e^{-2C(\beta + 1)t}\va_{\nu_\beta}(f),$$
for any function $f\in L^2(\nu_\beta)$ and $\beta > -1$.
\end{theorem}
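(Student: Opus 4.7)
The plan is to deduce all three parts of Theorem~\ref{weightedPoincare1} from the weighted Brascamp--Lieb inequalities of Theorem~\ref{BLtypeineq1} and its reverse form~\eqref{eq:reverseBLtypineq1}, and to treat the semigroup statement by following \emph{verbatim} the strategy of Proposition~\ref{spectral-gap}. The two Poincar\'e-type bounds are obtained by a pointwise operator estimate: the uniform concavity hypothesis $-D^2\varphi(x)\ge C\, I_n$ on $\Omega$ dualises to
\[
\langle (-D^2\varphi(x))^{-1} v, v\rangle \le \frac{|v|^2}{C}, \qquad \forall\, v\in \R^n,\ x\in \Omega.
\]
Inserting this pointwise inequality with $v=\nabla f(x)$ into the right-hand side of~\eqref{eq:BLtypeineq1} yields~\eqref{eq:weightedPoincare1}, and the same substitution in~\eqref{eq:reverseBLtypineq1} yields~\eqref{eq:reverseweightedPoincare}.

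For the semigroup part, I would first identify $N_\beta$ as the symmetric generator on $L^2(\nu_\beta)$ whose Dirichlet form is $\mathcal{E}(f,g) := \int_\Omega \langle \nabla f,\nabla g\rangle\, \varphi\, d\nu_\beta$. Integrating by parts against the weight $\varphi^{\beta+1}\ind_\Omega$, and using that $\beta+1>0$ together with the standard vanishing of the boundary contribution at $\partial\Omega$ (Neumann-type conditions), one gets for smooth $f,g$:
\[
\int_\Omega g\, N_\beta f\, d\nu_\beta \;=\; -\int_\Omega \langle \nabla f,\nabla g\rangle\, \varphi\, d\nu_\beta \;=\; -\mathcal{E}(f,g).
\]
In particular $N_\beta 1 = 0$ and $\int N_\beta f\, d\nu_\beta = 0$, so $P_t$ preserves both the constants and the $\nu_\beta$-mean, and I can reduce to $\int f\, d\nu_\beta = 0$.

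Mimicking the proof of Proposition~\ref{spectral-gap}, set $F(t):=\va_{\nu_\beta}(P_t f) = \int (P_t f)^2\, d\nu_\beta$ for such $f$, differentiate, and apply the integration-by-parts identity to obtain
\[
F'(t) \;=\; 2\int (P_t f)\, N_\beta P_t f\, d\nu_\beta \;=\; -2\int_\Omega |\nabla P_t f|^2\, \varphi\, d\nu_\beta.
\]
Applying the weighted Poincar\'e inequality~\eqref{eq:weightedPoincare1} to $P_t f$ bounds the right-hand side above by $-2C(\beta+1) F(t)$, and Gr\"onwall's lemma gives the claimed exponential decay $F(t) \le e^{-2C(\beta+1) t} F(0)$.

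The main obstacle I anticipate is not in the computation itself but in its rigorous functional-analytic justification near $\partial\Omega$. When $-1<\beta<0$ the density $\varphi^\beta$ blows up on the boundary, and even for $\beta\ge 0$ the operator $N_\beta$ degenerates there because its leading coefficient is $\varphi$; one must therefore describe carefully the domain of self-adjointness of $N_\beta$ and the Neumann-type boundary conditions that define the semigroup, as well as justify the differentiation under the integral defining $F'(t)$. As in Proposition~\ref{spectral-gap}, I would present the computation at a formal level and defer the rigorous construction to the standard framework, e.g.\ Bakry~\cite{Bakry1992}.
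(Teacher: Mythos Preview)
Your proposal is correct and follows essentially the same route as the paper: the paper deduces~\eqref{eq:weightedPoincare1} and~\eqref{eq:reverseweightedPoincare} from Theorem~\ref{BLtypeineq1} and~\eqref{eq:reverseBLtypineq1} via the pointwise bound $(-D^2\varphi)^{-1}\le C^{-1}I_n$, and obtains the semigroup decay by repeating verbatim the argument of Proposition~\ref{spectral-gap}. Your identification of $N_\beta$ as the generator associated to the Dirichlet form $\int_\Omega\langle\nabla f,\nabla g\rangle\,\varphi\,d\nu_\beta$ is correct, and your caveat about the formal nature of the semigroup computation (with reference to~\cite{Bakry1992}) matches exactly what the paper does.
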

Let us finally consider the measures $\tau_{\sigma,\beta}$ defined by~\eqref{eq:Cauchytypedensity}. For these measures, one has $\varphi(x) = \sigma^2 - |x|^2$. Since $D^2\varphi = -2 I_n$, applying Theorem~\ref{weightedPoincare1}, we get the following results for measures $\tau_{\sigma,\beta}$,
\begin{corollary}\label{Poincareineq}
Given $\beta > -1$ and $\sigma > 0$,  let $\tau_{\sigma,\beta}$ be the probability measure defined by~\eqref{eq:Cauchytypedensity}. For any locally Lipschitz, $\tau_{\sigma,\beta}-$square integrable functions $f$ on $\Omega = \{x: |x|<\sigma\}$,  we have
\begin{equation}\label{eq:wpi}
\va_{\tau_{\sigma,\beta}}(f)\leq \frac{1}{2(\beta+1)}\int_{\Omega}|\nabla f(x)|^2(\sigma^2-|x|^2)\, d\tau_{\sigma,\beta}.
\end{equation}
Moreover, if $\beta\geq 0$, for any smooth function $f$ on $\Omega$, we get
$$\inf_{c\in \R}\int_{\Omega}\frac{|f(x)-c|^2}{\sigma^2-|x|^2}\, d\tau_{\sigma,\beta}\leq \frac{1}{2\beta}\int |\nabla f(x)|^2\, d\tau_{\sigma,\beta}.$$
Finally, consider $N_{\sigma,\beta} = (\sigma^2 -|x|^2)\Delta -2(\beta + 1)\langle x,\nabla\rangle$ and let $P_t=e^{tN_{\sigma, \beta}}$ be the semigroup associated to $N_{\sigma,\beta}$ on $L^2(\mu_{\sigma,\beta})$; then 
$$ \va_{\tau_{\sigma,\beta}}(P_tf)\leq e^{-4(\beta + 1)t}\va_{\tau_{\sigma,\beta}}(f), $$
for any function $f\in L^2(\tau_{\sigma,\beta})$ and $\beta > -1$.
\end{corollary}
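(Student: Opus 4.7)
The plan is straightforward: this corollary is essentially a direct specialization of Theorem~\ref{weightedPoincare1} to the concrete concave function $\varphi(x)=\sigma^2-|x|^2$ on the ball $\Omega=\{|x|<\sigma\}$. So first I would verify that the hypotheses of Theorem~\ref{weightedPoincare1} are satisfied for this choice. Clearly $\varphi$ is positive on $\Omega$, $C^2$, and concave; a direct computation gives $D^2\varphi(x)=-2\,I_n$, so $-D^2\varphi=2\,I_n\geq C\,I_n$ with $C=2$. The probability measure $d\nu_\beta=\varphi(x)^\beta\ind_\Omega(x)\,dx/\int_\Omega\varphi^\beta$ is, up to normalization, precisely $\tau_{\sigma,\beta}$ as defined in~\eqref{eq:Cauchytypedensity}.

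Given this matching, I would now simply plug $C=2$ into the three conclusions of Theorem~\ref{weightedPoincare1}. Inequality~\eqref{eq:weightedPoincare1} becomes
$$\va_{\tau_{\sigma,\beta}}(f)\leq \frac{1}{2(\beta+1)}\int_\Omega|\nabla f|^2(\sigma^2-|x|^2)\,d\tau_{\sigma,\beta},$$
valid for $\beta>-1$ and any locally Lipschitz $f\in L^2(\tau_{\sigma,\beta})$, which is the first assertion~\eqref{eq:wpi}. Inequality~\eqref{eq:reverseweightedPoincare} with $C=2$ gives the reverse-weighted form for $\beta>0$ (the endpoint $\beta=0$ follows by a limiting argument, since both sides depend continuously on $\beta$).

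For the semigroup statement, I would identify the generator. With $\varphi(x)=\sigma^2-|x|^2$ one has $\nabla\varphi=-2x$, so the operator $N_\beta=\varphi\Delta+(\beta+1)\langle\nabla\varphi,\nabla\cdot\rangle$ from Theorem~\ref{weightedPoincare1} specializes exactly to $N_{\sigma,\beta}=(\sigma^2-|x|^2)\Delta-2(\beta+1)\langle x,\nabla\rangle$. The exponential decay rate $e^{-2C(\beta+1)t}$ becomes $e^{-4(\beta+1)t}$, yielding the last claim.

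The only non-routine point, really, is to make sure the semigroup argument of Proposition~\ref{spectral-gap} extends to the concave framework of Theorem~\ref{weightedPoincare1} (which is asserted in its statement). The integration by parts for $N_\beta$ on a bounded convex $\Omega$ requires that boundary terms vanish: this is where the weight $\varphi^\beta$ plays a role, since $\varphi=0$ on $\partial\Omega$, and one checks (as in~\cite{Bakry1992}) that $N_\beta$ is symmetric on $L^2(\nu_\beta)$ with no boundary contribution whenever $\beta>-1$. Once this is taken for granted, the equivalence between the Poincaré-type inequality and the exponential variance decay is obtained by differentiating $t\mapsto\va_{\nu_\beta}(P_tf)$ at $t=0$ in one direction, and by Gronwall's lemma in the other, exactly as in the proof of Proposition~\ref{spectral-gap}. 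No further obstacle arises; the corollary is a clean instantiation.
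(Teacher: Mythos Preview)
Your proposal is correct and follows exactly the paper's approach: the corollary is obtained by applying Theorem~\ref{weightedPoincare1} to $\varphi(x)=\sigma^2-|x|^2$, for which $-D^2\varphi=2I_n$ so that $C=2$, and then reading off the three conclusions. One small remark: the endpoint $\beta=0$ in the reverse-weighted inequality needs no limiting argument, since the right-hand side $1/(2\beta)$ is $+\infty$ there and the inequality is vacuous.
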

Let us remark that inequality~\eqref{eq:wpi} is sharp, and that equality holds for the linear functions $f(x)=\langle v_0, x\rangle$, for any  $v_0\in \R^n$.


\section{Further remarks}

We conclude with some straightforward extensions of  Theorems~\ref{maintheorem1} and~\ref{maintheorem2}.

One should note that the inequalities in these theorems are not invariant under translation. Consider first the Case 1, with $\varphi$ convex smooth on some open convex set $\Omega\subseteq \R^n$ and $ \beta > r + (n + \sqrt{n^2 + 4(r^2 -r)n})/2$. Recall the definition of $R(f)$ in~\eqref{eq:defR}.
Next introduce
$$ S(f):=\int_\Omega f\varphi^{r-1}\, d\mu_\beta - \big(1 + \frac{(1 - r)^2}{(\beta -2r + 1)A(n,\beta,r)}\big)\int_\Omega f \, d\mu_\beta\int_\Omega\varphi^{r-1}\, d\mu_\beta,$$
so that
$$ R(f+c\varphi^{r -1})=R(f)+2cS(f)+c^2R(\varphi^{r -1}),\quad\forall \, c\in\R. $$
Since the right hand side of~\eqref{eq:main3} does not change if we replace $f$ by $f+c\varphi^{r-1}$, we have
\begin{equation}\label{eq:main4}
R(f)+2cS(f)+c^2R(\varphi^{r-1})\leq \frac{1}{\beta -2r +1}\int \frac{\langle (D^2\varphi)^{-1}\nabla g,\nabla g\rangle}{\varphi}\, \varphi^{2r} d\mu_\beta ,\quad \forall\, c\in\R
\end{equation}
with  $g=\varphi^{1 - r} f$. 
Optimizing the left hand side of~\eqref{eq:main4} over $c\in\R$, we obtain a stronger version of the inequality~\eqref{eq:convexmainresult},
\begin{equation}\label{eq:mainoptimization}
R(f)-\frac{S(f)^2}{R(\varphi^{r -1})}\leq \frac{1}{\beta -2r +1}\int \frac{\langle (D^2\varphi)^{-1}\nabla g,\nabla g\rangle}{\varphi}\, \varphi^{2r} d\mu_\beta ,
\end{equation}
with $g=\varphi^{1-r} f$.

Similarly, in the Case 2, let  $\varphi$ be a concave function defined on a bounded open convex set $\Omega\subset \R^n$ and $ \beta > -r + (-n + \sqrt{n^2 + 4(r^2 -r)n})/2$. Recall the definition of $\overline R(f)$ in ~\eqref{eq:defRbar} and introduce
$$ \overline{S}(f):=\int_{\Omega}f\varphi^{r-1}\, d\nu_\beta-\bigl(1 + \frac{(1 - r)^2}{(\beta +2r -1)B(n,\beta,r)}\bigl)\int_{\Omega}f \, d\nu_\beta\int_{\Omega}\varphi^{r-1}\, d\nu_\beta.$$
The optimized version of~\eqref{eq:concavemainresult} is as follows (since in this case $(\beta +2r -1)\overline{R}(\varphi^{r-1})\leq 0$):
\begin{equation}\label{eq:optconcave}
(\beta+2r -1)\bigg(\overline{R}(f)-\frac{\overline{S}(f)^2}{\overline{R}(\varphi^{r-1})}\bigg)\leq \int_\Omega \frac{\langle (-D^2\varphi)^{-1}\nabla g,\nabla g\rangle}{\varphi}\, \varphi^{2r}d\nu_\beta,
\end{equation} 
with $g=\varphi^{r-1} f$. 

A priori, these optimized forms leave room for different kind of normalization: $\int f d\mu_\beta = 0$, or $\int f \varphi^{r-1} d\mu_\beta = 0$. However, we were not able to obtain new information from it.

A nicer observation, maybe,  is that the results in this paper automatically extend to Riemannian manifolds. This is one of the advantages of the $L^2$ approach we exploited here. 
%

Let $M$ be a complete $n$-dimensional Riemannian manifold, equiped with its Riemannian element of volume $d\vol$. We assume, in addition, that $M$ has the following approximate property: there exists an increasing sequence of the compact subsets $\{M_k\}_k$ of $M$ such that $M = \cup_{k=1}^\infty M_k$ and $M_k$ is given by
$$M_k =\{x\, :\, \rho_k(x) < 0\},\quad\rho_k\,\text{ is smooth, }\quad D^2\rho_k(x) \ge 0,\quad\forall\, x\in M.$$
The only difference in the computations we did in Section \S 2 is an extra Ricci curvature term  coming from the commutation of  the (covariant) derivative and the Laplacian. More precisely, one can check, using the Bochner-Lichnerovicz formula (see for instance Proposition 4.15 in~\cite{GHL}),
 that formula~\eqref{eq:square} holds with an extra term on the right equal to
$\frac{\beta -r}{\beta -2r}\int_M\Ric (\nabla u,\nabla u)\varphi^{2r} d\mu_\beta $
where $\Ric_x (\cdot , \cdot)$ stands for the Ricci curvature at point $x$ (that is also identified to a symmetric operator on the tangent space $T_x M$). And then formula~\eqref{squareintegration3} holds with the term $\frac{(\beta -r)^2}{\beta -2r}\int_M\Ric(\nabla u,\nabla u)\varphi^{2r}d\mu_\beta$ added on its right hand side. Given a smooth function $\varphi$ on $M$, introduce the symmetric operator, which can be seen as a modified Bakry-Emery tensor, defined on $T_x M$ by
$$H_x\varphi := D^2\varphi(x)  + \frac{\varphi(x)}{\beta -2r}\Ric_x ,$$
where $D^2(x)\varphi$ denotes the Riemannian Hessian of $\varphi$ at $x$. 
Then the result of this paper extend to $M$ provided one properly replaces the convexity of $\varphi$ (or of $-\varphi$ for the Case 2). Here is an example of result, corresponding to Theorem~\ref{maintheorem1}
\begin{theorem}
Let $M$ be a complete $n$-dimensional manifold having the approximate property above. Let us give constants $\beta, r\in \R$ and $A(n,\beta,r) $ as in Theorem~\ref{maintheorem1}. Assume that we are given a probability measure $d\mu_\beta(x) = \varphi(x)^{-\beta}d\vol(x)$ where $\varphi$ a smooth function on $M$ such that $H_x \varphi > 0$ at every $x\in M$. Then for any locally Lipschitz $\mu_\beta-$square integrable function $f$ on $M$, setting $g = f\varphi^{1-r}$, we have
$$
(\beta - 2r +1)\va_{\mu_\beta}(f)\leq \int\frac{\langle (H\varphi)^{-1}\nabla g,\nabla g\rangle}{\varphi}\, \varphi^{2r}\, d\mu_\beta + \frac{(1 - r)^2}{A(n,\beta,r)} \bigg(\int f\, d\mu_\beta\bigg)^2. 
$$
\end{theorem}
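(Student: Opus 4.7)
The plan is to mimic verbatim the $L^2$-argument used to prove Theorem~\ref{maintheorem1} in Section \S 2, while keeping track of one additional term that appears when one commutes the Riemannian Laplacian with the gradient via the Bochner--Lichnerowicz formula. First I would reduce to working on the compact subsets $M_k$ provided by the hypothesis: because $\{M_k\}$ is increasing and exhausts $M$, it is enough to prove the inequality with $M$ replaced by $M_k$ and then pass to the limit (the defining functions $\rho_k$ are smooth and convex, so $\partial M_k$ is smooth and plays the role of $\partial\Omega$ in Section \S 2). On each such $M_k$ we can also assume $f$ and $\varphi$ smooth on $\overline{M_k}$ by approximation. Introduce the operator $Lu=\varphi^r\Delta u-(\beta-r)\varphi^{r-1}\langle \nabla\varphi,\nabla u\rangle$, where now $\Delta$ and $\nabla$ are the Riemannian Laplace--Beltrami operator and the Riemannian gradient. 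By the same classical elliptic theory as in~\cite{KM} applied to $d\mu_{\beta-r}=\varphi^{-(\beta-r)}d\vol$ on $M_k$, the Neumann problem $Lu=f-\mu_\beta(f)$ has a smooth solution with $\partial_\nu u=0$ on $\partial M_k$.

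Next I would redo the computation of $\int (Lu)^2\,d\mu_\beta$. The only place where the argument from Section \S 2 uses the flat structure is in the identity $\partial_i\Delta u=\Delta\partial_i u$. In the Riemannian setting the Bochner--Lichnerowicz formula yields instead the commutator $(\nabla\Delta u-\Delta\nabla u)_i=-\Ric(\nabla u,\cdot)_i$ (more usefully, $\int \langle\nabla\Delta u,\nabla u\rangle\,\varphi^{2r}d\mu_\beta$ picks up, after integration by parts, the extra term $\int\Ric(\nabla u,\nabla u)\,\varphi^{2r}d\mu_\beta$ together with the usual Hessian-squared term). Carrying this through the same chain of identities as before shows that formula~\eqref{eq:square} remains valid \emph{verbatim} except for the addition of the extra term
\[
\frac{\beta-r}{\beta-2r}\int_{M_k}\Ric(\nabla u,\nabla u)\,\varphi^{2r}\,d\mu_\beta
\]
on its right-hand side. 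Propagating this through the dualization step of~\eqref{squareintegration} and the use of~\eqref{abcd}--\eqref{eq:vho}, formula~\eqref{squareintegration3} then acquires the corresponding extra term $\frac{(\beta-r)^2}{\beta-2r}\int_{M_k}\Ric(\nabla u,\nabla u)\,\varphi^{2r}\,d\mu_\beta$ on the right-hand side.

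Now comes the key observation which motivates the definition of $H\varphi$: this Ricci term combines naturally with the Hessian term already present, since
\[
(\beta-r)^2\int\frac{\langle D^2\varphi\,\nabla u,\nabla u\rangle}{\varphi}\varphi^{2r}\,d\mu_\beta+\frac{(\beta-r)^2}{\beta-2r}\int\Ric(\nabla u,\nabla u)\,\varphi^{2r}\,d\mu_\beta=(\beta-r)^2\int\frac{\langle H\varphi\,\nabla u,\nabla u\rangle}{\varphi}\,\varphi^{2r}\,d\mu_\beta,
\]
exactly by the definition $H\varphi=D^2\varphi+\frac{\varphi}{\beta-2r}\Ric$. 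The hypothesis $H_x\varphi>0$ on $M$ therefore replaces the pointwise convexity of $\varphi$ used in~\eqref{eq:convexmainresult}, and the pointwise inequality $2\langle v,w\rangle-\langle Hv,v\rangle\le\langle H^{-1}w,w\rangle$ applied with $H=H\varphi$ yields
\[
-2(\beta-r)\langle \nabla u,\nabla g\rangle-(\beta-r)^2\langle H\varphi\,\nabla u,\nabla u\rangle\le\langle (H\varphi)^{-1}\nabla g,\nabla g\rangle.
\]
The trace inequality $(\tr Q)^2\le n\|Q\|_{HS}^2$ and the boundary term $\int_{\partial M_k}\frac{\langle D^2\rho_k\,\nabla u,\nabla u\rangle}{|\nabla\rho_k|}\varphi^{-\beta+2r}\ge 0$ (guaranteed by $D^2\rho_k\ge 0$) are both unchanged, so the final bookkeeping reproduces the proof of Theorem~\ref{maintheorem1} with $D^2\varphi$ replaced by $H\varphi$.

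Passing to the limit $k\to\infty$ via monotone convergence on the exhaustion gives the claimed inequality on all of $M$. The one nontrivial point is the solvability of the Neumann problem on each $M_k$ (it suffices to invoke the same elliptic theory as in the Euclidean case, which applies since $M_k$ is smooth, compact, and $\varphi^{-(\beta-r)}$ is smooth and bounded below on $\overline{M_k}$). The main conceptual obstacle is really the bookkeeping: writing down carefully the Bochner term in~\eqref{eq:square} and verifying that it survives the successive substitutions~\eqref{abcd} and~\eqref{eq:vho} with coefficient $\frac{(\beta-r)^2}{\beta-2r}$, so that it precisely completes the Hessian into $H\varphi$.
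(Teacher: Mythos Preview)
Your proposal is correct and follows essentially the same route as the paper's own argument in Section~\S 6: reduce to compact sublevel sets $M_k$, repeat the $L^2$-computation of Section~\S 2 with the Bochner--Lichnerowicz commutator producing the extra Ricci term $\frac{\beta-r}{\beta-2r}\int\Ric(\nabla u,\nabla u)\varphi^{2r}\,d\mu_\beta$ in~\eqref{eq:square}, propagate it to the coefficient $\frac{(\beta-r)^2}{\beta-2r}$ in~\eqref{squareintegration3}, and absorb it with the Hessian term into $H\varphi$. Your write-up is in fact more detailed than the paper's sketch (in particular the explicit use of the exhaustion and the Neumann solvability on each $M_k$); just be careful that in~\eqref{squareintegration3} both the Hessian and the Ricci contributions carry a \emph{minus} sign, so that they combine into $-(\beta-r)^2\int\frac{\langle H\varphi\,\nabla u,\nabla u\rangle}{\varphi}\varphi^{2r}\,d\mu_\beta$ before applying the pointwise inequality.
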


We leave to the reader the corresponding applications and particular cases given in Sections \S 3 and \S 4. Note that for the Case 2 (where $M$ is bounded) with the measure $d\nu_\beta = \varphi(x)^\beta \, d\vol(x)$, the operator to consider is
$$\widetilde H_x \varphi := - D^2\varphi(x) + \frac{\varphi(x)}{\beta + 2r}\Ric_x \, ,$$
and the concavity of $\varphi$ is replaced by the requirement that  $\widetilde H_x $ is positive at every $x\in M$.


\subsection*{Acknowledgment}
I would like to thank my advisor Dario Cordero-Erausquin for his encouragements, his careful review of this manuscript and his many useful discussions. 


\end{document}